\documentclass[12pt]{amsart}

\usepackage{amscd,amssymb,times, amsmath,color, fullpage}
\usepackage{graphicx}
\usepackage{enumitem}
\pagestyle{headings}

\setlength{\headheight}{6.15pt}
\setlength{\headsep}{0.5cm}

\newtheorem{theorem}{Theorem}
\newtheorem{lemma}[theorem]{Lemma}
\newtheorem{proposition}[theorem]{Proposition}
\newtheorem{corollary}[theorem]{Corollary}
\theoremstyle{definition}

\newtheorem{example}[theorem]{Example}

\theoremstyle{remark}
\newtheorem{remark}[theorem]{Remark}

\begin{document}
\title{Invariant submanifolds of metric contact pairs}
\author{Amine Hadjar}
\address{Laboratoire de Math{\'e}matiques, Informatique et
Applications, Universit{\'e} de Haute Alsace - 4, Rue des
Fr{\`e}res Lumi{\`e}re, 68093 Mulhouse Cedex, France}
\email{mohamed.hadjar{\char'100}uha.fr}
\author{Paola Piu}
\address{Dipartimento di Matematica e Informatica, Universit\`a degli Studi di Cagliari, Via Ospedale 72, 09124 Cagliari, Italy}
\email{piu@unica.it}

\thanks{The second author was supported by a Visiting Professor fellowship at Universit\'e de Haute Alsace - Mulhouse in June 2014  and PRIN 2010/11 "Variet\`a reali e complesse: geometria, topologia e analisi armonica" - Italy; GNSAGA -INdAM, Italy}

\date{\today; MSC 2010 classification: primary 53C25; secondary 53B20, 53D10, 53B35, 53C12}
\keywords{Metric contact pair; Minimal invariant submanifold; Metric $f$-structure; Almost contact metric manifold.}

\maketitle
\vspace{5mm}

\begin{abstract}
We show that $\phi$-invariant submanifolds of metric contact pairs with orthogonal characteristic
foliations make  constant angles with the Reeb vector fields.
Our main result is that  for the normal case such submanifolds of dimension at least $2$ are
all minimal. We prove that an odd-dimensional $\phi$-invariant submanifold of a metric contact
pair with orthogonal characteristic foliations inherits a contact form with an almost contact metric
structure, and this induced structure is contact metric if and only if the submanifold is tangent to
one Reeb vector field and orthogonal to the other one.
Furthermore we show that the leaves of the two characteristic foliations of the differentials of the contact pair are minimal. 
We also prove that when one Reeb vector field is Killing and spans one characteristic foliation, the metric contact pair is a product of a contact metric manifold with $\mathbb{R}$.
\end{abstract}

\section{Introduction}
\noindent On a Riemannian manifold endowed with a tensor field  $\varphi$ of type $(1,1)$, a submanifold is said to be $\varphi$-invariant (or invariant) when its tangent bundle is preserved by $\varphi$.
Under some compatibility conditions between $\varphi$ and the metric, as is for example the case for almost Hermitian manifolds, the questions concerning the minimality of the submanifold and the nature of the induced structure are natural and interesting.
It is well known 
that invariant submanifolds of a contact metric manifold are minimal, and
the same holds for those of a K\"ahler manifold. 
However Vaisman  in \cite{Vaisman1982}
proved that a $J$-invariant submanifold of a Vaisman manifold ($J$ being the complex structure) is minimal if and only if it inherits a Vaisman structure. 
These manifolds are a subclass of locally conformally K\"ahler (lcK) manifolds.   
Dragomir and Ornea in \cite{Ornea} generalized this result by showing that a $J$-invariant submanifold of an lcK manifold is minimal if and only if the submanifold is tangent to the Lee vector field (and therefore tangent to the anti-Lee vector field).
By a statement of Bande and Kotschick  \cite{BK} normal metric contact pairs of type $(h,0)$  are nothing but non-K\"ahler Vaisman manifolds, so the results of Vaisman and Dragomir-Ornea apply to these manilfolds. 

A metric contact pair is a manifold endowed with a special case of a metric $f$-structure with two complemented frames in the sense of Yano \cite{yano}.
It carries an endomorphism field $\phi$ of corank $2$ and two natural commuting almost complex structures $J$ and $T$ of opposite orientations. 
When $J$ and $T$ are both integrable the structure is said to be normal. 
For $J$-invariant or $T$-invariant submanifolds on a normal metric contact pair with decomposable $\phi$ the problem was solved by Bande and the first author \cite{BH5} by proving that these submanifolds are minimal if and only if they are tangent to both the two Reeb vector fields of the structure, and they gave an example of such submanifold on which the contact pair of the ambient manifold does not induce a contact pair.
Regarding the $\phi$-invariant case they gave partial results.

In this paper we study $\phi$-invariant submanifolds of a metric contact pair $(M, \alpha_1, \alpha_2, \phi, g)$  of  any type $(h,k)$ with decomposable $\phi$, by  looking at the angles that Reeb vector fields make with these submanifolds.
We prove that these two angles are constant, then we solve completely the problem of minimality and that of induced structures.
First we show the following. 

A connected $\phi$-invariant submanifold $N$ of a metric contact pair with decomposable $\phi$ and Reeb vector fields $Z_1$ and $Z_2$, 
satisfies one of the following properties:
\begin{itemize}
\item[-] $N$ is even-dimensional and tangent to both $Z_1$ and $Z_2$.
\item[-] $N$ is $1$-dimensional and contained in one of the $2$-dimensional leaves of the vertical foliation spanned by $Z_1$ and $Z_2$.
\item[-] $N$ is of odd dimension $\geq 3$ everywhere tangent to one Reeb vector field $Z_1$ and orthogonal to the other one $Z_2$, or vice versa.
\item[-] $N$ is of odd dimension $\geq 3$, nowhere tangent and nowhere orthogonal to $Z_1$ and $Z_2$ making two constant angles with them.
\end{itemize}

For the minimality problem, as a consequence we prove the following.

Any $\phi$-invariant submanifold of dimension $\geq 2$ of a normal metric contact pair with decomposable $\phi$ is minimal. 
The $1$-dimensional case is obvious since it concerns vertical geodesics i.e. those which are integral curves of $c_1 Z_1+c_2 Z_2$ with $c_i$ constant functions.

Now let us return to the question concerning the induced structure on a $\phi$-invariant submanifold $N$ of a metric contact pair. 
When $N$ is even-dimensional, it is tangent to the Reeb vector fields and then it is $J$-invariant (and $T$-invariant). 
Although it does not always inherit a contact pair \cite{BH5}, we observe that it carries at least a metric $f$-structure with two complemented frames.
By normality on the ambient manifold we get a $\mathcal{K}$-structure (see \cite{BLY1973} for a definition) on $N$.
For the case $N$ of odd dimension along which the Reeb vector fields are nowhere tangent and nowhere orthogonal, when $\phi$ is decomposable, we prove that  
$N$ carries a contact form and an almost contact metric structure which is not contact metric, 
while for the other cases ($N$ tangent to one Reeb vector field and orthogonal to the other one) it has been shown in \cite{BLY} that $N$ is a contact metric submanifold of one of the characteristic leaves of the contact pair. By normality on the ambient manifold, the induced structure on $N$ will be normal.

Furthermore we prove that the leaves of the characteristic foliations of $d\alpha_1$ and $d\alpha_2$ of a metric contact pair $(M, \alpha_1, \alpha_2, \phi, g)$ with decomposable $\phi$ are minimal.  
Their leaves are $\phi$-invariant and tangent to both the two Reeb vector fields, but for our proof the integrability of $J$ or  $T$ is not needed.
We give an example where one of these two foliations is not totally geodesic.
When the type numbers of the contact pair are $(h,0)$ i.e. when the Reeb vector field $Z_2$ spans the characteristic distribution of $\alpha_1$, we prove the following.

If $Z_2$ is Killing, then the two characteristic foliations of the contact pair are totally geodesic, and then the metric contact pair is locally product of a contact metric manifold with $\mathbb{R}$. By normality the first factor will be Sasakian.

\section{Preliminaries}\label{preliminaries}
\noindent Blair, Ludden and Yano \cite{BLY} introduced in Hermitian geometry the notion of \emph{bicontact structure}.
This topic was formulated again as \emph{contact pair} by Bande in his PhD thesis in 2000, and then together with the first author in \cite{BH}.
A pair of $1$-forms $(\alpha_1, \alpha_2)$ on a manifold $M$ is said to be a \emph{contact pair} of type $(h,k)$ if
\begin{eqnarray*}
&\alpha_1\wedge (d\alpha_1)^{h}\wedge\alpha_2\wedge
(d\alpha_2)^{k} \qquad \text{is a volume form},\\
&(d\alpha_1)^{h+1}=0 \qquad  \text{and} \qquad (d\alpha_2)^{k+1}=0.
\end{eqnarray*}
The latter two conditions guarantee the integrability of the two 
subbundles of the tangent bundle
$$
T\mathcal{F}_i=\{X:\alpha_i(X)=0, d\alpha_i(X,Y)=0 \;  \forall Y   \}, i=1,2.
$$
They determine
the \emph{characteristic
  foliations} $\mathcal{F}_1$ of $\alpha_1$ and  $\mathcal{F}_2$ of $\alpha_2$ which are transverse and complementary. 
The leaves of $\mathcal{F}_1$ and $\mathcal{F}_2$ are contact manifolds of dimension $2k+1$ and $2h+1$ respectively, with contact forms induced by $\alpha_2$ and $\alpha_1$ (see \cite{BH}). 
We also define the $(2h+2k)$-dimensional \emph{horizontal subbundle} $\mathcal{H}$ to be the intersection of the kernels of $\alpha_1$ and $\alpha_2$. 

The equations
\begin{eqnarray*}
&\alpha_1 (Z_1)=\alpha_2 (Z_2)=1  , \qquad \; \alpha_1 (Z_2)=\alpha_2
(Z_1)=0 \, , \\
&i_{Z_1} d\alpha_1 =i_{Z_1} d\alpha_2 =i_{Z_2}d\alpha_1=i_{Z_2}
d\alpha_2=0 \, ,
\end{eqnarray*}
where $i_X$ is the contraction with the vector field $X$, determine uniquely the two vector fields $Z_1$ and $Z_2$, called \emph{Reeb vector fields}. Since they commute \cite{BH}, they give rise to a locally free $\mathbb{R}^2$-action, 
an integrable distribution called \emph{Reeb distribution},
and then a foliation $\mathcal{V}$ of $M$ by surfaces. The subbundle $T\mathcal{V}=\mathbb{R} Z_1 \oplus \mathbb{R} Z_2$ is called the \emph{vertical subbundle} and 
the tangent bundle of $M$
splits as:
$$
TM=T\mathcal F _1 \oplus T\mathcal F _2 =\mathcal{H}\oplus
\mathbb{R} Z_1 \oplus \mathbb{R} Z_2 .
$$

A \emph{contact pair structure} \cite{BH2} on a manifold $M$ is a triple
$(\alpha_1 , \alpha_2 , \phi)$, where $(\alpha_1 , \alpha_2)$ is a
contact pair and $\phi$ a tensor field of type $(1,1)$ such that:

\begin{eqnarray*}
\phi^2=-Id + \alpha_1 \otimes Z_1 + \alpha_2 \otimes Z_2 , \; \;
\phi Z_1=\phi Z_2 =0 .
\end{eqnarray*}
The rank of $\phi$ is
$\dim M -2$ and $\alpha_i \circ \phi =0$, for $i=1,2$.

The endomorphism $\phi$ is said to be \emph{decomposable}  if
$\phi (T\mathcal{F}_i) \subset T\mathcal{F}_i$, for $i=1,2$.
This is a natural condition that allows to have on each leaf of $\mathcal{F}_1$ and $\mathcal{F}_2$ an induced almost contact structure.

In \cite{BH3}  the notion of \emph{normality} for a contact pair structure is defined as the integrability of both of the two natural commuting almost complex structures of opposite orientations $J=\phi  - \alpha_2 \otimes Z_1 + \alpha_1 \otimes Z_2 $ and $T=\phi  + \alpha_2 \otimes Z_1 - \alpha_1 \otimes Z_2 $ on $M$.
This is equivalent to the vanishing of the tensor field
\begin{eqnarray*}
N^1 (X,Y)=  
[\phi , \phi ](X, Y) +2 d\alpha_1 (X,Y) Z_1 +2 d\alpha_2 (X,Y) Z_2 
\end{eqnarray*}
where $[\phi , \phi ]$ is the Nijenhuis tensor of $\phi$. 

A Riemannian metric $g$ on a manifold endowed with a contact pair structure is said to be \emph{associated}  
\cite{BH2}  if
$$g(X, \phi Y) = (d \alpha_1 + d \alpha_2) (X,Y)  \quad \text{and} \quad  g(X, Z_i) = \alpha_i(X)   \quad \text{for} \; i=1,2.
$$
 
Such a metric is necessarily \emph{compatible} with respect to the contact pair structure, which means that
$$g(\phi X,\phi Y)=g(X,Y)-\alpha_1 (X)
\alpha_1 (Y)-\alpha_2 (X) \alpha_2 (Y).$$
Moreover the subbundles $\mathbb{R}Z_1$, $\mathbb{R}Z_1$ and $\mathcal{H}$ are pairwise orthogonal.

A \emph{metric contact pair} on a manifold $M$ is a
four-tuple $(\alpha_1, \alpha_2, \phi, g)$ where $(\alpha_1,
\alpha_2, \phi)$ is a contact pair structure and $g$ an associated
metric with respect to it. Such a manifold $M$ will also be called a metric contact pair for short.

On a metric contact pair the endomorphism field $\phi$ is decomposable if and only if the characteristic foliations $\mathcal{F}_1$, 
$\mathcal{F}_2$ are orthogonal 
\cite{BH2}.
In this case the leaves of $\mathcal{F}_j$ are minimal submanifolds \cite{BH4},  they carry contact metric structures induced by 
$( \phi, Z_i, \alpha_i, g)$, for $j\neq i$ (see \cite{BH2}), and 
by the normality they become Sasakian \cite{BH3}.
Of course the product of two contact metric manifolds (or a contact metric manifold with $\mathbb{R}$) gives rise to a metric contact pair with decomposable endomorphism, and the structure is normal if and only if the two factors are Sasakian.
It is important to note that there exist metric contact pairs with decomposable $\phi$ which are not locally products of contact metric manifolds as shown in the following example (see also \cite{BH4} for a similar construction).

\begin{example}\label{liegroup}
Consider the simply connected $6$-dimensional nilpotent Lie group $G^6$ with structure
equations:
\[
d \alpha_1 = \alpha_3 \wedge \alpha_5 \quad, \quad  d \alpha_2 = \alpha_4 \wedge \alpha_6
\quad, \quad d \alpha_6 = \alpha_4 \wedge \alpha_5,
 \]
\[
d \alpha_3 = d \alpha_4 = d \alpha_5 = 0
\]
The pair $(\alpha_1,\alpha_2)$ is a contact pair of type $(1,1)$ with Reeb vector fields $(Y_1,Y_2)$, the $Y_i$'s being dual to the $\alpha_i$'s.
The characteristic distribution of $\alpha_1$ (respectively $\alpha_2$) is spanned by $Y_4$, $Y_6$ and $Y_2$ (respectively $Y_3$, $Y_5$ and $Y_1$).
Take the metric
\[
g = \alpha_1 ^2 + \alpha_2 ^2 + \frac{1}{2}\left( \alpha_3 ^2 + \alpha_4 ^2 + \alpha_5 ^2 +\alpha_6 ^2 \right)
\]
and the decomposable endomorphism $\phi$ defined to be zero on $Y_1$, $Y_2$, and
\[
\phi Y_5 = Y_3 \quad, \quad  \phi Y_3 = - Y_5  \quad, \quad
\phi Y_6 = Y_4 \quad, \quad  \phi Y_4 = - Y_6
\]
Then $(\alpha_1,\alpha_2, \phi, g)$ is a left invariant metric contact pair on the Lie group $G^6$.
We can easily see that the leaves of the two characteristic foliations are Sasakian, though the metric contact pair is not normal because $N^1 (Y_3,Y_4)=[\phi,\phi](Y_3,Y_4) = Y_4 \neq 0$.
We also remark that the characteristic foliation of $\alpha_1$ is totally geodesic, while from 
$g\left(\nabla_{Y_4} Y_6, Y_5\right) = - \frac{1}{2}$
 follows that the characteristic foliation of $\alpha_2$ is not totally geodesic.
 
Since the structure constants of the nilpotent Lie algebra of $G^6$ are rational, there exist cocompact lattices $\Gamma$ of $G^6$. Now the metric contact pair on $G^6$ descends to all these quotients $G^6/\Gamma$ and we obtain closed nilmanifods carrying the same type of structure. All the remarks concerning the structure we constructed on $G^6$ still remain valid on the metric contact pairs $G^6/\Gamma$.
\end{example}

For the normal case we can give the following.

\begin{example}\label{H6}
As a manifold consider the product $H^6=\mathbb{H}_3 \times \mathbb{H}_3$ where $\mathbb{H}_3$ is the $3$-dimensional Heisenberg group. Let $\left\{ \alpha_1, \alpha_2, \alpha_3 \right \}$ (respectively $\left \{ \beta_1,  \beta_2,  \beta_3 \right\}$) be a basis of the cotangent space at the identity for the first (respectively second) factor $\mathbb{H}_3$ satisfying
$$
d\alpha_3=\alpha_1\wedge \alpha_2  \, , \qquad  d\alpha_1 = d\alpha_2 = 0,
$$
$$
d\beta_3=\beta_1\wedge \beta_2 \, , \qquad d\beta_1=d\beta_2=0.$$
The pair $(\alpha_3 , \beta_3 )$ determines a contact pair of type $(1,1)$ on $H^6$ with Reeb vector fields $(X_3 , Y_3 )$, the $X_i$'s (respectively the $Y_i$'s) being dual to the $\alpha_i$'s (respectively the $\beta_i$'s). The left invariant metric 
$$
g=\alpha_3^2 + \beta_3^2 +\frac {1}{2}(\alpha_1^2 + \beta_1^2 +\alpha_2^2 + \beta_2^2 )
$$
is associated to the pair with decomposable endomorphism $\phi$ given by $\phi(X_2)=X_1$ and $\phi(Y_2)=Y_1$.
The metric contact pair $(H^6,\alpha_3, \beta_3, \phi, g)$ is normal because it is the product of two Sasakian manifolds.
Also here $H^6$ admits cocompact lattices $\Gamma$ and the structure descends to the nilmanifolds $H^6/\Gamma$  as normal metric contact pairs.
\end{example}

Some other
interesting examples and properties of such structures were given in 
\cite{BBH,BH,BH2,BH3,BH4,BH5}.
In the following remark we describe how metric contact pairs relate to other well-known structures.

\begin{remark}
Normal metric contact pairs with decomposable endomorphism were already studied in \cite{BLY} under the name bicontact Hermitian manifolds of bidegree $(1,1)$.
They were regarded as a generalization of the Calabi-Eckmann manifolds.
A metric contact pair of type $(h,k)$ is a special case of metric $f$-structure of rank $2h+2k$ with two complemented frames in the sense of Yano \cite{yano}.
The normality condition of metric contact pairs i.e. the integrability of both almost complex structures $J$ and $T$ is equivalent to the normality condition as an $f$-structure which consists exactly on the vanishing of  the tensor field $N^1$ described before.
We can also observe that a normal metric contact pair is a special case of $\mathcal{K}$-structures in the sense of Blair, Ludden and Yano \cite{BLY1973}.
It has been shown in \cite{BK} that normal metric contact pairs of type $(h,0)$ are nothing but non-K\"ahler Vaisman manifolds (called $\mathcal{PK}$-manifolds in \cite{Vaisman}). The $\mathcal{P}$-manifolds of Vaisman \cite{Vaisman} are necessarily metric contact pairs of type $(h,0)$ where the Reeb vector fields are Killing, and they include the subclass of $\mathcal{PK}$-manifolds.
\end{remark}

In the course of our work we will need the following lemmas.

\begin{lemma}[Bande et al. \cite{BBH}]\label{nablaXZi}
On a manifold endowed with a contact pair structure with decomposable $\phi$ and a compatible metric, for every $X$ we have that $\nabla_X Z_1$ and $\nabla_X Z_2$ are horizontal.
\end{lemma}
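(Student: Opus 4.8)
The plan is to reduce "$\nabla_X Z_i$ is horizontal'' to the vanishing of the four quantities $g(\nabla_X Z_i, Z_j)$ for $i,j\in\{1,2\}$. Compatibility already pins down the metric duals of the Reeb fields: taking $X=Z_i$ in the compatibility identity and using $\phi Z_i=0$ gives $g(Z_i,\cdot)=\alpha_i$, so that $Z_1,Z_2$ are orthonormal, $g(Z_1,Z_2)=\alpha_1(Z_2)=0$, and $\mathcal{H}$ is exactly the orthogonal complement of $\Span\{Z_1,Z_2\}$. Hence a vector is horizontal precisely when it is orthogonal to both Reeb fields, and it suffices to show $g(\nabla_X Z_i, Z_j)=0$ for all $X$.

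First I would dispose of the diagonal terms. Since $g(Z_i,Z_i)=1$ is constant, metric compatibility of the Levi-Civita connection gives $g(\nabla_X Z_i, Z_i)=\tfrac12 X\bigl(g(Z_i,Z_i)\bigr)=0$ for every $X$. Thus the only thing at stake is the pair of cross terms $g(\nabla_X Z_1, Z_2)$ and $g(\nabla_X Z_2, Z_1)$. For these I would invoke the standard identity for the exterior derivative of a metrically dual $1$-form: because $\alpha_i=g(Z_i,\cdot)$ and the connection is torsion-free,
\begin{equation*}
d\alpha_i(X,Y)=g(\nabla_X Z_i, Y)-g(\nabla_Y Z_i, X).
\end{equation*}
Feeding in the contraction identities $i_{Z_2}d\alpha_1=0$ and $i_{Z_1}d\alpha_2=0$ that define the Reeb fields (taking $Y=Z_2$ in the first case and $Y=Z_1$ in the second) yields the two relations $g(\nabla_X Z_1, Z_2)=g(\nabla_{Z_2}Z_1, X)$ and $g(\nabla_X Z_2, Z_1)=g(\nabla_{Z_1}Z_2, X)$.

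The crux is to play these two relations against each other. Metric compatibility applied to the constant $g(Z_1,Z_2)=0$ gives $g(\nabla_X Z_1, Z_2)=-g(\nabla_X Z_2, Z_1)$, while the commutativity $[Z_1,Z_2]=0$ of the Reeb fields (together with torsion-freeness) gives $\nabla_{Z_1}Z_2=\nabla_{Z_2}Z_1$. Substituting these into the second relation turns it into $g(\nabla_X Z_1, Z_2)=-g(\nabla_{Z_2}Z_1, X)$, whereas the first relation reads $g(\nabla_X Z_1, Z_2)=g(\nabla_{Z_2}Z_1, X)$; adding the two forces $g(\nabla_X Z_1, Z_2)=0$, and hence $g(\nabla_X Z_2, Z_1)=0$, for every $X$. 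Combined with the vanishing of the diagonal terms, this shows both $\nabla_X Z_1$ and $\nabla_X Z_2$ are orthogonal to $Z_1$ and $Z_2$, i.e. horizontal.

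I expect no serious obstacle in carrying this out; the one genuine subtlety, which I would flag explicitly, is that neither contraction identity suffices on its own. Each alone only expresses the cross term through $g(\nabla_{Z_2}Z_1,X)$, a quantity that need not vanish a priori, so the argument truly requires using $d\alpha_1$ and $d\alpha_2$ \emph{together} with the commutativity of the Reeb fields to produce the sign flip that cancels it. I would also note that decomposability of $\phi$ is never actually invoked: only the compatibility-orthogonality, the defining contraction identities $i_{Z_j}d\alpha_i=0$, and $[Z_1,Z_2]=0$ enter the proof.
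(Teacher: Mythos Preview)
Your argument is correct. Note, however, that the paper does not actually prove this lemma: it is quoted as a result from \cite{BBH}, so there is no in-paper proof to compare against. Your derivation is self-contained and uses only the compatibility of the metric (to obtain $g(Z_i,\cdot)=\alpha_i$ and the orthonormality of the Reeb fields), the Levi-Civita axioms, the defining contraction identities $i_{Z_j}d\alpha_i=0$, and the commutativity $[Z_1,Z_2]=0$.

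One minor remark on conventions: the paper uses $d\alpha(X,Y)=\tfrac12\bigl(X\alpha(Y)-Y\alpha(X)-\alpha([X,Y])\bigr)$, as is visible in the proof of Lemma~\ref{nablaZiX}. Under that convention your displayed identity should read $2\,d\alpha_i(X,Y)=g(\nabla_X Z_i,Y)-g(\nabla_Y Z_i,X)$. This is harmless for the argument, since you only use the vanishing of the left-hand side. Your closing observation that decomposability of $\phi$ is never invoked is also correct.
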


\begin{lemma}\label{nablaZiX}
On a manifold endowed with a contact pair structure with decomposable $\phi$ and a compatible metric,
for every $X$ horizontal we have that $\nabla_ {Z_1}X$ and $\nabla_{ Z_2}X$ are horizontal.
\end{lemma}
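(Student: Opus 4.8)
The plan is to reduce the statement to the single identity $\nabla_{Z_i}Z_j=0$ for all $i,j\in\{1,2\}$, after which the lemma follows in one line. First recall that compatibility of $g$ already forces $g(X,Z_i)=\alpha_i(X)$ for every $X$: putting $Y=Z_i$ in $g(\phi X,\phi Y)=g(X,Y)-\alpha_1(X)\alpha_1(Y)-\alpha_2(X)\alpha_2(Y)$ and using $\phi Z_i=0$ gives $0=g(X,Z_i)-\alpha_i(X)$. Thus each $\alpha_i$ is the metric dual of $Z_i$, and the usual formula for the exterior derivative of a metrically dual $1$-form yields
\[
2\,d\alpha_i(X,Y)=g(\nabla_X Z_i,Y)-g(\nabla_Y Z_i,X)
\]
for all vector fields $X,Y$; the precise normalization constant plays no role below, since we will only use that the left-hand side vanishes on certain arguments.

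To obtain $\nabla_{Z_j}Z_i=0$, I would put $X=Z_j$ in the identity above. The defining relation $i_{Z_j}d\alpha_i=0$ of the Reeb vector fields kills the left-hand side, so $g(\nabla_{Z_j}Z_i,Y)=g(\nabla_Y Z_i,Z_j)$ for every $Y$. By Lemma~\ref{nablaXZi} the field $\nabla_Y Z_i$ is horizontal, hence $g$-orthogonal to the vertical field $Z_j$, so the right-hand side is $0$. As $Y$ is arbitrary, $\nabla_{Z_j}Z_i=0$ for all $i,j\in\{1,2\}$ (in particular the Reeb vector fields are geodesic).

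Finally, let $X$ be a horizontal vector field and $i,j\in\{1,2\}$. Then
\[
g(\nabla_{Z_j}X,Z_i)=Z_j(g(X,Z_i))-g(X,\nabla_{Z_j}Z_i)=Z_j(\alpha_i(X))-0=0,
\]
because $\alpha_i(X)\equiv 0$ by horizontality of $X$ and $\nabla_{Z_j}Z_i=0$ by the previous paragraph. Hence $\nabla_{Z_j}X$ is orthogonal to $\mathbb{R}Z_1\oplus\mathbb{R}Z_2$, i.e. it lies in the horizontal subbundle $\mathcal{H}$, which is the assertion. I do not expect a serious obstacle: once Lemma~\ref{nablaXZi} is invoked, everything is a direct computation with the Levi-Civita connection. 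The one point that deserves care is the step from ``$\nabla_{Z_j}Z_i$ is horizontal'' — which is immediate from Lemma~\ref{nablaXZi} — to ``$\nabla_{Z_j}Z_i=0$''; this is exactly where the contact-pair axiom $i_{Z_j}d\alpha_i=0$ is genuinely needed, and where decomposability of $\phi$ enters (through Lemma~\ref{nablaXZi}).
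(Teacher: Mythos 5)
Your proof is correct, but it is organized differently from the paper's. The paper argues via torsion-freeness: it writes $\alpha_j(\nabla_{Z_i}X)=\alpha_j(\nabla_X Z_i)+\alpha_j([Z_i,X])$, kills the first term by Lemma~\ref{nablaXZi}, and kills the second by the intrinsic formula $\alpha_j([Z_i,X])=Z_i\alpha_j(X)-X\alpha_j(Z_i)-2d\alpha_j(Z_i,X)$, each summand vanishing because $X$ is horizontal, $\alpha_j(Z_i)$ is constant, and $i_{Z_i}d\alpha_j=0$. You instead first extract the stronger intermediate fact $\nabla_{Z_j}Z_i=0$ (which the paper invokes later with a citation to an earlier reference, rather than reproving) from the dual-form identity $2d\alpha_i(X,Y)=g(\nabla_XZ_i,Y)-g(\nabla_YZ_i,X)$ together with Lemma~\ref{nablaXZi}, and then conclude by metric compatibility of the Levi-Civita connection, $g(\nabla_{Z_j}X,Z_i)=Z_j(\alpha_i(X))-g(X,\nabla_{Z_j}Z_i)=0$. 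Both routes rest on exactly the same two inputs (Lemma~\ref{nablaXZi}, hence decomposability, and the Reeb conditions $i_{Z_j}d\alpha_i=0$, $\alpha_j(Z_i)=\mathrm{const}$); your version has the small advantage of making explicit that the Reeb vector fields are geodesic and that the vertical foliation is totally geodesic, facts the paper needs anyway, while the paper's version is a one-line bracket computation. Your preliminary observation that compatibility alone already forces $g(X,Z_i)=\alpha_i(X)$ is also correct and worth noting, since the lemma assumes only a compatible (not associated) metric.
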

\begin{proof}
By the previous lemma, $\alpha_j(\nabla_X Z_i)=0$ for $i,j=1,2$, giving
\begin{eqnarray*}
\alpha_j(\nabla_ {Z_i}X)
&=& \alpha_j([Z_i,X])   \\
&=&  Z_i \alpha_j(X)-X \alpha_j(Z_i)-2d\alpha_j(Z_i,X) \\
&=&0.
\end{eqnarray*}
\end{proof}

\section{$\phi$-invariant submanifolds}\label{section-phi-invariant}

\noindent A submanifold $N$ of a metric contact pair is said to be $\phi$-invariant if its tangent bundle $TN$ is preserved by the endomorphism field $\phi$.
We will denote by $Z_i^T$ (respectively, $Z_i^\bot$) the tangential (respectively, normal) component of the two Reeb vector fields $Z_1$ and $Z_2$ along $N$.
In the following proposition we recall some properties from \cite{BH5} we need and concerning the positions of the Reeb vector fields along a $\phi$-invariant submanifold $N$.

\begin{proposition}[Bande et al. \cite{BH5}]\label{the4areverticalnobothorthogonal}~
\begin{enumerate}
\item \label{the4arevertical} Along the $\phi$-invariant submanifold $N$ the four sections $Z_1^T$, $Z_2^T$, $Z_1^\bot$ and $Z_2^\bot$ are vertical.
\item \label{nobothorthogonal}There is no point $p$ of $N$ such that the tangent vectors $(Z_1)_{p}$ and $(Z_2)_p$ are both orthogonal to the tangent space $T_pN$.
\item \label{orthogonaltangent} If at a point $p$ of $N$ one Reeb vector field is tangent to $N$ and the second one is transverse, then the second one is orthogonal to $N$ at $p$.

\end{enumerate}
\end{proposition}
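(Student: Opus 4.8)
The plan is to derive everything from the defining identity $\phi^2=-\id+\alpha_1\otimes Z_1+\alpha_2\otimes Z_2$, the $\phi$-invariance of $TN$, and the orthogonality of the splitting $TM=\mathcal H\oplus\bR Z_1\oplus\bR Z_2$. Recall $\alpha_i(X)=g(X,Z_i)$, so the orthogonal projections of a vector $X$ onto $\mathcal H$ and onto $T\mathcal V$ are exactly $-\phi^2X$ and $\alpha_1(X)Z_1+\alpha_2(X)Z_2$. The observation I would record first is: if $X\in T_pN$ then $\phi X,\phi^2X\in T_pN$ by $\phi$-invariance, hence the horizontal part $-\phi^2X$ and the vertical part $X+\phi^2X=\alpha_1(X)Z_1+\alpha_2(X)Z_2$ of $X$ are again tangent to $N$; in other words $T_pN$ is adapted to the splitting, $T_pN=(T_pN\cap\mathcal H_p)\oplus(T_pN\cap T_p\mathcal V)$ at every point of $N$.

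For item (1) I would feed $X=Z_i^T$ into this and set $H_i:=-\phi^2Z_i^T\in T_pN\cap\mathcal H_p$. Because $H_i$ is horizontal while the vertical part of $Z_i^T$ lies in $\bR Z_1\oplus\bR Z_2$, one gets $g(H_i,H_i)=g(H_i,Z_i^T)$; and because $H_i$ is simultaneously horizontal and tangent to $N$ while $Z_i^\bot$ is normal, $g(H_i,Z_i^T)=g(H_i,Z_i)-g(H_i,Z_i^\bot)=\alpha_i(H_i)-0=0$. Hence $H_i=0$, so $Z_i^T$ is vertical and then $Z_i^\bot=Z_i-Z_i^T$ is vertical as well. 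Item (3) then falls out quickly: if $Z_1\in T_pN$ and $Z_2\notin T_pN$, writing $Z_2^T=aZ_1+bZ_2$ by (1) we see $bZ_2=Z_2^T-aZ_1\in T_pN$, so $b=0$, and $0=g(Z_1,Z_2)=g(Z_1,aZ_1+Z_2^\bot)=a$ since $Z_2^\bot\perp T_pN\ni Z_1$; thus $Z_2^T=0$.

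I expect item (2) to be the real obstacle, since orthogonality to both Reeb fields at a single point is not by itself a linear-algebraic impossibility — one has to exploit that $N$ is an honest submanifold. The plan there: assuming $(Z_1)_p,(Z_2)_p\perp T_pN$ (whence $Z_i^T=0$ at $p$, and $\dim N\ge 1$), the first paragraph gives $T_pN\subseteq\mathcal H_p$. Since $T\mathcal V$ has constant rank $2$ and $TN$ constant rank, the map $q\mapsto\dim(T_qN\cap T_q\mathcal V)$ is upper semicontinuous on $N$, so it stays $0$ on a neighbourhood $U$ of $p$ in $N$; hence $\alpha_1$ and $\alpha_2$ vanish on all of $TU$. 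Then, for $X,Y\in\Gamma(TU)$, the formula $2d\alpha_i(X,Y)=X\alpha_i(Y)-Y\alpha_i(X)-\alpha_i([X,Y])$ gives $d\alpha_i(X,Y)=0$ (the first two terms vanish, and $[X,Y]\in TU\subseteq\mathcal H$ kills the third), so $(d\alpha_1+d\alpha_2)|_{TU}\equiv 0$. But for $0\ne X\in T_pN\subseteq\mathcal H_p$ we have $\phi X\in T_pN$ and $(d\alpha_1+d\alpha_2)(X,\phi X)=g(X,\phi^2X)=-|X|^2\ne 0$, a contradiction, so no such $p$ exists. The one delicate point — and the only place the submanifold hypothesis (equivalently, a Frobenius-type argument together with the non-integrability of $\mathcal H$) really enters — is the passage from the single point $p$ to the neighbourhood $U$ via semicontinuity of the intersection dimension.
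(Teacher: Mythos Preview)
Your proof is correct. Note, however, that this proposition is not proved in the present paper: it is quoted from \cite{BH5} and merely recalled here, so there is no ``paper's own proof'' to compare against.

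A few words on your argument. The key observation that $T_pN$ splits as $(T_pN\cap\mathcal H_p)\oplus(T_pN\cap T_p\mathcal V)$, obtained from $\phi^2X\in T_pN$, is the right starting point and makes item~(1) a short computation; item~(3) then follows by elementary linear algebra exactly as you wrote. For item~(2), your use of upper semicontinuity of $q\mapsto\dim(T_qN\cap T_q\mathcal V)$ together with the adapted splitting is a clean way to pass from the single point $p$ to an open neighbourhood $U\subset N$ with $TU\subset\mathcal H$; after that, the integrability of $TU$ forces $d\alpha_1$ and $d\alpha_2$ to vanish on $TU$, which contradicts $(d\alpha_1+d\alpha_2)(X,\phi X)=-\lVert X\rVert^2\neq0$ for any nonzero $X\in T_pN$. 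The only implicit hypothesis you use is $\dim N\ge 1$, which is understood throughout the paper.
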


To this we can add the following.

\begin{proposition}\label{oneorthogonalonetangent}
If at a point $p$ of the $\phi$-invariant submanifold $N$ one of the Reeb vector fields is orthogonal to $N$, then the second one is tangent to $N$ at $p$.
\end{proposition}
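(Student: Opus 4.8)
The plan is to argue pointwise, working entirely inside the $2$-dimensional vertical plane $V_p := \mathbb{R}(Z_1)_p \oplus \mathbb{R}(Z_2)_p$, on which $g$ restricts so that $\{(Z_1)_p,(Z_2)_p\}$ is an orthonormal basis (recall the Reeb vector fields are nowhere zero). After relabelling the Reeb vector fields if necessary, I may assume that $(Z_1)_p$ is orthogonal to $T_pN$, i.e.\ $(Z_1^T)_p=0$; the goal is then to show that the normal component vanishes, $(Z_2^\bot)_p=0$, equivalently $(Z_2)_p\in T_pN$.

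First I would invoke Proposition~\ref{the4areverticalnobothorthogonal}(\ref{the4arevertical}): at $p$ the section $Z_2^\bot$ is vertical, hence $(Z_2^\bot)_p\in V_p$, and being a normal component it also lies in $(T_pN)^\bot$. Thus $(Z_2^\bot)_p\in V_p\cap (T_pN)^\bot$. The key step is the dimension count: this intersection is exactly the line $\mathbb{R}(Z_1)_p$. Indeed, it contains $(Z_1)_p$ by our assumption, and it cannot be all of $V_p$, for otherwise $(Z_2)_p$ would also be orthogonal to $T_pN$, so both Reeb vectors would be normal to $N$ at $p$, contradicting Proposition~\ref{the4areverticalnobothorthogonal}(\ref{nobothorthogonal}). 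Consequently $(Z_2^\bot)_p = c\,(Z_1)_p$ for some scalar $c\in\mathbb{R}$.

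To finish, I would compute $c$ by pairing with $(Z_1)_p$. Since $(Z_2^T)_p\in T_pN$ and $(Z_1)_p\perp T_pN$, we have $g\bigl((Z_2^T)_p,(Z_1)_p\bigr)=0$, and therefore
\[
c = g\bigl((Z_2^\bot)_p,(Z_1)_p\bigr) = g\bigl((Z_2)_p-(Z_2^T)_p,(Z_1)_p\bigr) = g\bigl((Z_2)_p,(Z_1)_p\bigr) = \alpha_2\bigl((Z_1)_p\bigr) = 0 .
\]
Hence $(Z_2^\bot)_p=0$, so $(Z_2)_p$ is tangent to $N$ at $p$, which is the assertion.

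The argument is short and I do not expect a genuine obstacle; the only delicate point is the linear-algebra step identifying $V_p\cap(T_pN)^\bot$ with the single line $\mathbb{R}(Z_1)_p$, and this is precisely where Proposition~\ref{the4areverticalnobothorthogonal}(\ref{nobothorthogonal})—the fact that the two Reeb vector fields are never simultaneously orthogonal to $N$—is indispensable, without it the statement being false. It is worth noting that this proposition is the natural companion of Proposition~\ref{the4areverticalnobothorthogonal}(\ref{orthogonaltangent}); together they pin down the relative position of $Z_1$ and $Z_2$ along $N$ whenever one of them happens to be tangent or orthogonal at a point.
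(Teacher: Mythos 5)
Your argument is correct and uses the same ingredients as the paper's proof — Proposition~\ref{the4areverticalnobothorthogonal}, parts (\ref{the4arevertical}) and (\ref{nobothorthogonal}), together with linear algebra in the orthonormal vertical plane spanned by $(Z_1)_p$ and $(Z_2)_p$; the only cosmetic difference is that you pin down the normal component $(Z_2^\bot)_p$ while the paper works with the tangential component $(Z_2^T)_p$, which are two faces of the same computation.
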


\begin{proof}
Suppose that $(Z_1)_p$ is orthogonal to $T_pN$.
First by Proposition \ref{the4areverticalnobothorthogonal}-(\ref{nobothorthogonal}) we have $(Z_2^T)_p\neq 0$.
Moreover $(Z_2^T)_p$ is orthogonal to $(Z_1)_p$.
Next by Proposition \ref{the4areverticalnobothorthogonal}-(\ref{the4arevertical}) the vector $(Z_2^T)_p$ lies in the plane $\left((Z_1)_{p},(Z_2)_p\right)$ and we get $(Z_2^T)_p=(Z_2)_p$.
\end{proof}

Now after these observations we can state the following proposition.

\begin{proposition}\label{proposition2caspairimpair}
A $\phi$-invariant submanifold which is tangent to both the Reeb vector fields has even dimension. Otherwise, its dimension is odd and its tangent bundle intersects the Reeb distribution along a line bundle.
\end{proposition}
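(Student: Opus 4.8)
The plan is to work pointwise: fix $p\in N$ and analyze the intersection $T_pN\cap (T\VV)_p$, where $(T\VV)_p=\bR (Z_1)_p\oplus\bR(Z_2)_p$ is the $2$-dimensional vertical plane. By Proposition~\ref{the4areverticalnobothorthogonal}-(\ref{nobothorthogonal}) the two Reeb vectors cannot both be orthogonal to $T_pN$, so at least one of the tangential components $Z_1^T$, $Z_2^T$ is nonzero at $p$; hence $T_pN\cap(T\VV)_p$ is either $1$-dimensional or all of $(T\VV)_p$ (it cannot be $0$). First I would split $T_pN$ orthogonally as $\mathcal{W}_p\oplus\left(T_pN\cap(T\VV)_p\right)$, where $\mathcal{W}_p$ is the orthogonal complement inside $T_pN$. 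The key structural observation is that $\mathcal{W}_p$ is a $\phi$-invariant subspace on which $\phi$ is a complex structure: indeed $\phi$ preserves $T_pN$ by hypothesis, it kills the vertical directions, and on the horizontal part $\phi^2=-\id$ by the contact pair structure identity $\phi^2=-\id+\alpha_1\otimes Z_1+\alpha_2\otimes Z_2$. So the dimension of $\mathcal{W}_p$ is even.

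Next I would treat the two cases. If $T_pN\cap(T\VV)_p=(T\VV)_p$, i.e.\ both $(Z_1)_p,(Z_2)_p\in T_pN$, then $\dim T_pN=\dim\mathcal{W}_p+2$ is even, and this gives the first assertion. If instead $T_pN\cap(T\VV)_p$ is a line $\ell_p$, then $\dim T_pN=\dim\mathcal{W}_p+1$ is odd, and the tangent space meets the Reeb distribution exactly along this line. To conclude globally I would note that the parity of $\dim N$ is constant on a connected submanifold, so once we know one point falls in the "tangent to both" case the whole manifold does (and the dimension is even everywhere), whereas if some point is not tangent to both Reeb fields then $\dim N$ is odd everywhere; and at every point of an odd-dimensional such $N$, Proposition~\ref{the4areverticalnobothorthogonal}-(\ref{nobothorthogonal}) again forces $T_pN\cap(T\VV)_p$ to be nonzero, hence exactly a line since it cannot be all of $(T\VV)_p$ (that would make the local dimension even, contradicting oddness). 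So the line bundle $TN\cap T\VV$ is well-defined and of rank one over all of $N$.

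The one point requiring a little care is the claim that the intersection-with-vertical has constant rank along $N$, which is what licenses calling $TN\cap T\VV$ a genuine line bundle rather than a pointwise collection of lines of possibly varying... well, the rank is pinned by the parity of $\dim N$ together with the constraint that it is $1$ or $2$, so this is automatic once connectedness is invoked; I would phrase it through the parity argument rather than a semicontinuity argument. I expect the main (very mild) obstacle to be organizing the case distinction cleanly: one must rule out, at each point, that $T_pN\cap(T\VV)_p$ could be a line while simultaneously $\dim T_pN$ is even, but this is precisely excluded because $\dim T_pN=\dim\mathcal W_p+\dim(T_pN\cap(T\VV)_p)$ with $\dim\mathcal W_p$ even. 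Everything else is bookkeeping with the orthogonal decomposition $TM=\HHH\oplus\bR Z_1\oplus\bR Z_2$ and the fact that $\phi$ restricted to $\HHH$ squares to $-\id$.
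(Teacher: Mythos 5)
Your argument is correct and follows essentially the same route as the paper's: decompose $T_pN$ into its intersection with the Reeb plane plus the orthogonal complement $\mathcal{W}_p$ on which $\phi$ squares to $-\operatorname{Id}$, use Proposition~\ref{the4areverticalnobothorthogonal} to rule out a trivial intersection, and read off the parity (the paper phrases the even-dimensional case via $J$, but that is the same computation). The one step you should make explicit is why $\mathcal{W}_p$ is horizontal: a vector of $T_pN$ orthogonal to $Z_1^T$ and $Z_2^T$ is automatically orthogonal to $Z_1^\bot$ and $Z_2^\bot$, hence to $Z_1$ and $Z_2$ — this rests on part~(1) of Proposition~\ref{the4areverticalnobothorthogonal} (verticality of the four components), which your write-up uses only implicitly both here and when you infer that a nonzero $Z_i^T$ lies in $T_pN\cap(T\mathcal{V})_p$.
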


Indeed, it is clear that when a submanifold is tangent to both $Z_1$ and $Z_2$ at a point, since its tangent space at that point is preserved by $\phi$ it is also preserved by $J$ which is almost complex. Thus the dimension is even.

Now let us describe a little more a $\phi$-invariant submanifold $N$ which is not tangent to the Reeb distribution.
We have seen that the four sections $Z_1^T$, $Z_1^\bot$, $Z_2^T$ and $Z_2^\bot$ are vertical.
When at a point $p$ of $N$ one of the two Reeb vector fields is not tangent to $N$, then the second one is not orthogonal to the submanifold   $N$ (Proposition \ref{oneorthogonalonetangent}), and this occurs on a whole open set of $N$. Moreover at those points 
both families $\{Z_1^T, Z_2^T\}$ and  $\{Z_1^\bot,Z_2^\bot\}$ have rank one.
Any tangent vector of $N$ at $p$ which is orthogonal to $\{Z_1^T, Z_2^T\}$ is horizontal, since it is orthogonal to $\{Z_1^\bot,Z_2^\bot\}$ too and then orthogonal to $\{Z_1,Z_2\}$. Then by the $\phi$-invariance of $N$, $\phi$ is almost complex when acting on horizontal vectors tangent to $N$, i.e. on the \emph{horizontal part} $\mathcal{H}\cap TN$ of $TN$.
This explains the odd dimension of $N$ and the fact that the Reeb distribution still remains  nowhere tangent to the submanifold.
Notice that $\{Z_1^T, Z_2^T\}$ spans a line bundle which is the intersection of the tangent bundle $TN$ with the Reeb distribution that can be called the \emph{vertical part} of $TN$.
Moreover we have 
$$
TN= \left(T\mathcal{V} \cap TN \right) \oplus \left(\mathcal{H} \cap TN \right).
$$

When $N$ is a $1$-dimensional $\phi$-invariant submanifold, then it is any $1$-dimensional submanifold of any $2$-dimensional leaf of the vertical foliation $\mathcal{V}$. So in the sequel we will suppose that the dimension of $N$ is at least $3$.

For simplicity suppose that $Z_2$ is nowhere tangent to $N$, so that $Z_1$ is nowhere orthogonal to $N$ (by Proposition \ref{oneorthogonalonetangent}). Normalizing $Z_1^T$ we get on $N$ a unit vector field spanning the vertical part $T\mathcal{V}\cap TN$ of $TN$:
$$
\zeta=\frac{1}{\Vert Z_1^T\Vert}Z_1^T.
$$
Now along $N$  the equation
$$
\zeta= (\cos \theta_1) Z_1+(\sin \theta_1) Z_2
$$
defines a smooth function $\theta_1$ on $N$ taking values in $]-\pi/2,\pi/2[$.
This function is the measure of  the angle  that $Z_1$ makes with the submanifold, i.e. the oriented angle $(Z_1,Z_1^T)$  in the plane 
$(Z_1,Z_2)$ oriented by the almost complex structure $J$. 
When $Z_1$ is nowhere tangent to the submanifold, similarly we get a function $\theta_2$ measuring the oriented angle $(Z_2,Z_2^T)$ that is the angle between $Z_2$ and the submanifold, and satisfying 
$Z_2^T/{\Vert Z_2^T\Vert}=(\cos \theta_2) Z_2+(\sin \theta_2) (-Z_1)$ with $-\pi/2<\theta_2<\pi/2.$

We will say that the submanifold $N$ is \emph{leaning} when both Reeb vector fields are leaning along $N$, i.e. when they are nowhere tangent and nowhere orthogonal to $N$. This means that the functions $\theta_1$ and $\theta_2$  are well defined and take nonvanishing values.

\begin{theorem}\label{theoremleaning}
Let $(M, \alpha_1, \alpha_2, \phi, g)$ be a metric contact pair of type $(h,k)$ with decomposable $\phi$. Suppose that $M$ carries a leaning $\phi$-invariant submanifold $N$ of odd dimension $2n+1\geq 3$. Then,
\begin{enumerate} 
\item \label{inegalite} for the angle $(Z_1,Z_1^T)$ we have $0<\theta_1<\pi/2$,
\item for the angle $(Z_2,Z_2^T)$ we have $\theta_2=\theta_1-\pi/2$,
\item \label{anglevecteurhorizontal} each nonzero horizontal tangent vector $X$ of $N$ decomposes as $X=X_1+X_2$, $X_i$ being nonzero horizontal vector tangent to  the characteristic foliation $\mathcal{F}_j$ for $j\neq i$, with $\Vert X_2\Vert^2/ \Vert X_1\Vert^2= \tan \theta_1$, 
\item for the type numbers we have $h\geq n$ and $k\geq n$, i.e. the dimensions of the two characteristic foliations are at least $2n+1$.
\end{enumerate}
\end{theorem}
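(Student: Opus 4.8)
The plan is to work pointwise at a point $p\in N$ and exploit the decomposition $T_pN = \mathbb{R}\zeta \oplus (\mathcal{H}\cap T_pN)$ established above, together with the fact that on a metric contact pair with decomposable $\phi$ the horizontal bundle splits orthogonally as $\mathcal{H} = \mathcal{H}_1 \oplus \mathcal{H}_2$, where $\mathcal{H}_i = \mathcal{H}\cap T\mathcal{F}_j$ for $j\neq i$, and $d\alpha_i$ restricted to $\mathcal{H}_i$ is (up to the usual factor) the symplectic form paired with $g$ via $\phi$. Concretely, for $X,Y\in\mathcal{H}$ one has $g(X,\phi Y) = d\alpha_1(X,Y) + d\alpha_2(X,Y)$, and because $\phi$ is decomposable it preserves each $\mathcal{H}_i$; writing $X = X_1 + X_2$ accordingly, $d\alpha_1$ sees only the $X_1$-part and $d\alpha_2$ only the $X_2$-part. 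The key computational input I expect to need is the relation between the vertical part $\zeta$ of $TN$ and the horizontal part: since $N$ is $\phi$-invariant, $\phi\zeta$ is tangent to $N$, and $\phi\zeta = (\cos\theta_1)\phi Z_1 + (\sin\theta_1)\phi Z_2 = 0$, so that gives nothing directly — instead I would differentiate or use that for any horizontal $X\in T_pN$, the bracket-type identities force a constraint.

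The cleaner route: for $X\in\mathcal{H}\cap T_pN$ write $X = X_1+X_2$ with $X_i\in\mathcal{H}_i$. Since $T_pN$ is $\phi$-invariant and contains $X$ and $\zeta$, and $J\zeta$ must be expressible in $T_pN\oplus(\text{normal})$, I compute $J\zeta = \phi\zeta - \alpha_2(\zeta)Z_1 + \alpha_1(\zeta)Z_2 = (\cos\theta_1) Z_2 - (\sin\theta_1)Z_1$ using $\alpha_1(\zeta)=\cos\theta_1$, $\alpha_2(\zeta)=\sin\theta_1$. Now $J\zeta$ is a unit vertical vector orthogonal to $\zeta$; its tangential part $(J\zeta)^T$ lies along $N$ and, being vertical and in $T_pN\cap T\mathcal{V}$, must be a multiple of $\zeta$ — but it is orthogonal to $\zeta$, hence $(J\zeta)^T = 0$, i.e. $J\zeta \perp T_pN$. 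That identifies $J\zeta$ (equivalently $T\zeta$, up to sign) as the unit normal vector field in the vertical plane, and from $J\zeta = (\cos\theta_1)Z_2 - (\sin\theta_1)Z_1$ we read off that $Z_2^\perp$ is a positive multiple of $J\zeta$ precisely when $\cos\theta_1 > 0$, which we have, and then $Z_2^T = Z_2 - (\text{that multiple})J\zeta$; comparing with the defining equation $Z_2^T/\|Z_2^T\| = (\cos\theta_2)Z_2 + (\sin\theta_2)(-Z_1)$ yields part (2), $\theta_2 = \theta_1 - \pi/2$. Part (1), $0<\theta_1<\pi/2$ rather than $\theta_1\in(-\pi/2,0)$, then follows from the orientation convention: $J$ orients the vertical plane so that $(Z_1,Z_2)$ is positively oriented, and $\zeta = (\cos\theta_1)Z_1+(\sin\theta_1)Z_2$ together with the requirement that $\theta_2\in(-\pi/2,\pi/2)$ (leaning, so $\theta_2\neq 0$, $\theta_2\neq\pm\pi/2$) forces $\theta_1-\pi/2 \in(-\pi/2,0)$, i.e. $\theta_1\in(0,\pi/2)$. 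I'd double-check the sign bookkeeping here — this is the fiddly part.

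For part (3), take a nonzero horizontal $X = X_1+X_2 \in T_pN$. Apply $\phi$: $\phi X = \phi X_1 + \phi X_2 \in T_pN$, still horizontal. The vector $JX = \phi X - \alpha_2(X)Z_1 + \alpha_1(X)Z_2 = \phi X$ (horizontal) is tangent to $N$. Now the crucial constraint comes from the vertical part: $N$ being a submanifold, for horizontal tangent vectors $X,Y$ the "mixed" component relating them to $\zeta$ must vanish in a way that couples $X_1$ and $X_2$. The relation I expect is that $\zeta$, being tangent, imposes $d\alpha_1(\zeta,\cdot)=d\alpha_2(\zeta,\cdot)=0$ on $TN$ automatically (Reeb property), so instead the coupling is: the orthogonal complement of $\mathbb{R}\zeta$ inside $T_pN\cap(\mathbb{R}Z_1\oplus\mathbb{R}Z_2\oplus\mathcal{H})$ that maps under $J$ back to $T_pN$ forces, for each horizontal $X\in T_pN$, that the pair $(X_1,X_2)$ satisfies a fixed ratio. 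Writing the unit normal vertical vector as $\nu = J\zeta = (\cos\theta_1)Z_2 - (\sin\theta_1)Z_1$ and using that $T_pN$ is a $J$-"rotated" complement, one gets that $X$ horizontal tangent to $N$ corresponds to $X_1 \in \mathcal{H}_1$, $X_2\in\mathcal{H}_2$ with $\|X_2\|^2/\|X_1\|^2 = \tan\theta_1$ — this should drop out of writing the condition "$X \perp Z_i^\perp$ for $i=1,2$" is automatic, but "$\phi X \in T_pN$ and $X\in T_pN$ simultaneously with $T_pN$ of the prescribed dimension" is not, and forces the ratio. In particular $X_1\neq 0$ and $X_2\neq 0$ for every nonzero horizontal $X\in T_pN$, since $\tan\theta_1\in(0,\infty)$. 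Finally, part (4): the map $X\mapsto X_1$ sends the $2n$-dimensional space $\mathcal{H}\cap T_pN$ injectively into $\mathcal{H}_1$ (injective because $X_1=0\Rightarrow X_2=0$ by the ratio $\Rightarrow X=0$), and it is $\phi$-equivariant, so its image is a $\phi$-invariant subspace of $\mathcal{H}_1$ of real dimension $2n$; hence $\dim\mathcal{H}_1 = 2h \geq 2n$, i.e. $h\geq n$, and symmetrically $k\geq n$. The dimensions of the leaves of $\mathcal{F}_j$ are $2h+1$ and $2k+1$ respectively, giving the stated inequality.

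The main obstacle I anticipate is pinning down part (3) rigorously: showing that the ratio $\|X_2\|^2/\|X_1\|^2$ is forced and equals $\tan\theta_1$ (rather than some other function of $\theta_1$) requires carefully tracking how $T_pN$ sits as a graph over $\mathcal{H}_1$ of a $\phi$-commuting isometry into $\mathcal{H}_2$ scaled by $\sqrt{\tan\theta_1}$, and extracting this from the interplay of $\phi$-invariance with the position of $\zeta$. The sign/orientation argument in parts (1)–(2) is routine but error-prone, and part (4) is then immediate linear algebra.
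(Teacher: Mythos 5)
There is a genuine gap, and it sits exactly where you flagged your own uncertainty: parts (1) and (3). The pointwise linear algebra you set up cannot force the ratio $\Vert X_2\Vert^2/\Vert X_1\Vert^2=\tan\theta_1$, nor the sign of $\sin\theta_1$. At a single point, $\phi$-invariance only says that $\mathcal{H}\cap T_pN$ is a $\phi$-invariant subspace of $(\mathcal{H}\cap T_p\mathcal{F}_2)\oplus(\mathcal{H}\cap T_p\mathcal{F}_1)$, and such a subspace could a priori lie entirely inside one summand (e.g.\ be contained in $\mathcal{H}\cap T_p\mathcal{F}_2$); nothing at the level of $T_pN$ alone couples the two components to $\theta_1$. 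Likewise your derivation of $\theta_2=\theta_1-\pi/2$ tacitly assumes that $Z_2^T$ is a \emph{positive} multiple of $\zeta$, i.e.\ that $\sin\theta_1>0$. If $\sin\theta_1<0$ one gets $Z_2^T/\Vert Z_2^T\Vert=-\zeta$ and hence $\theta_2=\theta_1+\pi/2$, which is equally consistent with $\theta_2\in\,]-\pi/2,\pi/2[$, so the orientation bookkeeping alone cannot rule this case out; your argument for (1) is circular as written.

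The missing ingredient is a first-order (integrability) computation, which is the heart of the paper's proof. For a local nonvanishing horizontal vector field $X$ tangent to $N$, both $\phi X$ and $[X,\phi X]$ are tangent to $N$. Using $\alpha_i([X_i,\phi X_i])=-2d\alpha_i(X_i,\phi X_i)=2\Vert X_i\Vert^2$ and the vanishing of the cross terms (by decomposability of $\phi$), the vertical component of $[X,\phi X]$ is $2\Vert X_1\Vert^2Z_1+2\Vert X_2\Vert^2Z_2$. Since the vertical part of $TN$ is the line $\mathbb{R}\zeta$ with $\zeta=(\cos\theta_1)Z_1+(\sin\theta_1)Z_2$, this combination with nonnegative coefficients must be collinear with $\zeta$; together with $\cos\theta_1>0$ and $\sin\theta_1\neq0$ this forces at once $\sin\theta_1>0$ (part (1)), $X_1\neq0$, $X_2\neq0$ and $\Vert X_2\Vert^2/\Vert X_1\Vert^2=\tan\theta_1$ (part (3)), and then part (2). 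Your argument for part (4) --- injectivity of $X\mapsto X_1$ on $\mathcal{H}\cap T_pN$ because $X_1=0$ forces $X=0$ --- is correct once part (3) is available, and is the same as the paper's.
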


In this theorem Property \ref{anglevecteurhorizontal} means that $X$ makes with $T\mathcal{F}_2$ (or more precisely with $\mathcal{H}\cap T\mathcal{F}_2$) an angle of absolute value $\theta^\prime _1=\arctan\sqrt{\tan\theta_1}$, and of course with $\mathcal{H}\cap T\mathcal{F}_1$ an angle $\theta^\prime _2=\pi /2 -\theta^\prime _1$.
Property \ref{inegalite} states that along $N$ the vertical part of $TN$ separates the vertical plane $(Z_1,Z_2)$ into two half-planes, each one containing one Reeb vector field, and for $\zeta$ we also have
$$
\zeta=\frac{1}{\Vert Z_2^T\Vert}Z_2^T=(\cos \theta_2) Z_2+(\sin \theta_2) (-Z_1).
$$ 

\begin{proof}
Take any local horizontal nonvanishing vector field $X$ of $N$. 
Such a vector field always exists since the dimension of $N$ is at least $3$.
Along $N$ it decomposes as $X=X_1+X_2$ with $X_i$ horizontal and tangent to $\mathcal{F}_j$, for $j\neq i$. 
By the $\phi$-invariance of $N$, $\phi X$ is also tangent to $N$, and $\phi X_i$ is horizontal and tangent to $\mathcal{F}_j$, for $j\neq i$, by decomposability of $\phi$. In order to compute the vertical part of $[X,\phi X]$ we have
$$
g([X,\phi X],Z_1) 
=\alpha_1([X_1,\phi X_1])+\alpha_1([X_1,\phi X_2])+\alpha_1([X_2,\phi X_1])+\alpha_1([X_2,\phi X_2]).
$$
For the first term we have $\alpha_1([X_1,\phi X_1])=-2d\alpha_1(X_1,\phi X_1)=2g(X_1,X_1)$, and the last three terms vanish. Thus we have
$$
g([X,\phi X],Z_1)=2 \Vert X_1\Vert ^2
$$
and similarly 
$$
g([X,\phi X],Z_2)=2 \Vert X_2\Vert ^2
$$
so that the vertical part of $[X,\phi X]$ is the vector field $2 \Vert X_1\Vert ^2Z_1+2 \Vert X_2\Vert ^2Z_2$.

Since $[X,\phi X]$ is also tangent to $N$, its vertical part is then collinear with $\zeta= (\cos \theta_1) Z_1+(\sin \theta_1) Z_2$.
Now from $\cos\theta_1 >0$ and $\sin \theta_1 \neq 0$, we obtain that $\sin \theta_1 >0$ i.e. $0<\theta_1<\pi/2$, and $\theta_2=\theta_1-\pi/2$.
Moreover $X_1$, $X_2$ do not vanish. For the measure of the angle $(X,X_1)$ we obtain its tangent which is $\Vert X_2\Vert / \Vert X_1\Vert=\sqrt{ \tan \theta_1}$.

Regarding the dimensions of the characteristic foliations, take at any point $p$ of $N$ a basis $\{e_1, \dots, e_{2n}  \}$ of the horizontal part $\mathcal{H}\cap T_pN$ of the tangent space $T_pN$.
Each vector $e_l$ being horizontal decomposes as $e_l=e_{l1}+e_{l2}$ with $e_{li}\in \mathcal{H}\cap T_p\mathcal{F}_j$ for $j\neq i$.
Let $\lambda_1, \dots, \lambda_{2n}  $ any real numbers such that $\sum_{l=1}^{2n}\lambda_l e_{l1}=0$.
Put $X=\sum_{l=1}^{2n}\lambda_l e_{l}$.
Then $X=X_1+X_2$ with $X_i=\sum_{l=1}^{2n}\lambda_l e_{li}$ lying in $ T_p\mathcal{F}_j$ for $j\neq i$.
Applying Property \ref{anglevecteurhorizontal} of this theorem to the horizontal vector $X$ we get $X=0$ since $X_1$ is supposed to be zero.
Hence from $\sum_{l=1}^{2n}\lambda_l e_{l}=0$ we obtain that $\lambda_l=0$ for all $l$.
Finally the vectors $e_{11}, \dots, e_{2n\,1} $ are linearly independent in $\mathcal{H}\cap T_p\mathcal{F}_2$ and then $h\geq n$.
In the same way we get $k\geq n$, and this completes the proof.
\end{proof}

\begin{example}

Consider the metric contact pairs on the nilpotent Lie group $G^6$ and its closed nilmanifolds $G^6/\Gamma$ described in Example \ref{liegroup}. For any two arbitrary nonzero real numbers $a$ and $b$, the three vectors
$$
X = a Y_3 + b Y_6  , \quad \phi X = - a Y_5 + b Y_4 \quad\text{and}\quad Z =[X,\phi X] = a^2 Y_1 + b^2 Y_2
$$
span a $\phi$-invariant subalgebra of the Lie algebra  of $G^6$ which determines a $3$-dimensional foliation $\mathcal{N}$ in $G^6$ (and also in the nilmanifolds $G^6/\Gamma$). Each leaf $N$ of $\mathcal{N}$ is $\phi$-invariant, leaning, minimal and non totally geodesic. The vertical part of $TN$ is spanned by $Z$ restricted to $N$, and the angle $\theta_1$ that the Reeb vector field $Y_1$ makes with $N$ satisfies
$$
\cos\theta_1=a^2/\sqrt{a^4+b^4} \qquad \text{and} \qquad \sin\theta_1=b^2/\sqrt{a^4+b^4}.
$$
By a suitable choice of $a$ and $b$, we can see that the angle $\theta_1$ can take any value in $]0,\pi/2[$.
\end{example}

In the same way, we can have $\phi$-invariant submanifolds on the normal metric contact pairs $H^6$ and its nilmanifolds described in Example \ref{H6}.

\begin{example}
Take again any nonzero real numbers $a$ and $b$.
The three vectors $X=a X_2 + b Y_1 $, $\phi X=a X_1 -b Y_2$ and $Z=[X,\phi X]= a^2 X_3 + b^2 Y_3$ span a $\phi$-invariant subalgebra of the Lie algebra 
of $H^6$ which determines a $3$-dimensional foliation $\mathcal{N}$ in $H^6$ (and also in each nilmanifold $H^6/\Gamma$).
Each leaf is $\phi$-invariant, leaning and totally geodesic.
Moreover the angle  that the Reeb vector field $X_3$ makes with the leaf has tangent equal to $b^2/a^2$.

In order to obtain an example of a closed $\phi$-invariant leaning submanifold, we have just to choose suitably a lattice of $H^6$.
Indeed let   $L_e$ be the leaf passing through the identity element of  the Lie group $H^6$.
We can see that the Lie subgroup $L_e$ is nothing but the Heisenberg group that admits cocompact lattices (see e.g. \cite{BH5} or \cite{GW} to get an explicit one). Take any of such lattices which we will call $\Gamma$. 
Because $L_e$ is a subgoup of $H^6$, we have that $\Gamma$ is also a lattice of $H^6$.
Now the closed nilmanifold $N^3=L_e/\Gamma$ of $L_e$ is a submanifold of the nilmanifold $M^6= H^6 /\Gamma$ of $H^6$. 
As explained before since the normal contact pair on 
$H^6$ is left invariant, it descends to the quotient $M^6$ as a normal metric contact pair
$(\tilde{\alpha_3}, \tilde{\beta_3}, \tilde{\phi}, \tilde{g})$ 
of type $(1,1)$ with decomposable endomorphism $\tilde{\phi}$. 
Finally we obtain a normal metric contact pair $M^6$ with a decomposable endomorphism, carrying a closed leaning $\tilde{\phi}$-invariant submanifold $N^3$. 
\end{example}

Before stating our main theorem concerning minimality of $\phi$-invariant submanifolds on normal metric contact pairs, 
we first give an important lemma which allows us to describe more the angles between odd-dimensional $\phi$-invariant submanifold and the Reeb vector fields.

\begin{lemma}\label{lemmeconstant}
Let $(M, \alpha_1, \alpha_2, \phi, g)$ be a metric contact pair with decomposable $\phi$, and $N$ a connected $\phi$-invariant submanifold of odd dimension $\geq 3$. 
If at a point of $N$ the Reeb vector field $Z_2$ is not tangent to $N$, then $Z_2$ is everywhere transverse to $N$,  the measure $\theta_1$ of the oriented angle that $Z_1$ makes with $N$
is constant and $0\leq\theta_1<\pi/2$.
\end{lemma}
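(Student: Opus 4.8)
The plan is to work on the open subset $U\subseteq N$ consisting of the points at which $Z_2$ is not tangent to $N$; this is nonempty by hypothesis, and I would prove that $\theta_1$ is locally constant on $U$ and that $U$ is closed in $N$, so that connectedness of $N$ forces $U=N$ and the constancy of $\theta_1$. On $U$, Proposition~\ref{oneorthogonalonetangent} shows that $Z_1$ is nowhere orthogonal to $N$, so $\zeta=Z_1^T/\Vert Z_1^T\Vert$ is a unit section of $T\mathcal V\cap TN$ and $\theta_1$ is the smooth function with $\cos\theta_1=g(\zeta,Z_1)>0$, $\sin\theta_1=g(\zeta,Z_2)$. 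The $[X,\phi X]$ computation from the proof of Theorem~\ref{theoremleaning} --- for a nonvanishing local horizontal $X$ tangent to $N$, with decomposition $X=X_1+X_2$ along the characteristic foliations, the vertical part of $[X,\phi X]$ equals $2\Vert X_1\Vert^2 Z_1+2\Vert X_2\Vert^2 Z_2$ and, being tangent to $N$, is a multiple of $\zeta$ --- gives at once $\sin\theta_1\ge 0$, i.e. $0\le\theta_1<\pi/2$. One then records $Z_1^T=(\cos\theta_1)\zeta$, $Z_2^T=(\sin\theta_1)\zeta$, $Z_1^\bot=-(\sin\theta_1)\eta$, $Z_2^\bot=(\cos\theta_1)\eta$, where $\eta:=-(\sin\theta_1)Z_1+(\cos\theta_1)Z_2=J\zeta$ is of unit norm, vertical, and orthogonal to $\zeta$, hence a unit normal field along $N$; in particular $\Vert Z_2^\bot\Vert=\cos\theta_1$ on $U$.

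The heart of the argument is to show $d\theta_1=0$ on $U$. For a local horizontal vector field $X$ tangent to $N$, differentiating $\zeta=(\cos\theta_1)Z_1+(\sin\theta_1)Z_2$ and using Lemma~\ref{nablaXZi} gives $\nabla_X\zeta=(X\theta_1)\eta+(\text{horizontal})$, whence $X\theta_1=g(\B(X,\zeta),\eta)$, with $\B$ the second fundamental form of $N$. By symmetry of $\B$ this equals $g(\B(\zeta,X),\eta)=g(\nabla_\zeta X,\eta)$; since $\zeta$ is a pointwise combination of $Z_1$ and $Z_2$ and $X$ may be extended to an ambient horizontal field, Lemma~\ref{nablaZiX} makes $\nabla_\zeta X$ horizontal, so its pairing with the vertical vector $\eta$ vanishes and $X\theta_1=0$. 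For the remaining tangent direction I would use that $\mathcal H\cap TN$ is bracket generating: for $X\ne 0$ horizontal tangent to $N$, the field $\phi X$ is again horizontal tangent to $N$ (decomposability and $\phi$-invariance), and the vertical part $2\Vert X_1\Vert^2 Z_1+2\Vert X_2\Vert^2 Z_2=\mu\,\zeta$ of $[X,\phi X]$ has $\mu>0$ since $X\ne 0$. Writing $[X,\phi X]=W+\mu\,\zeta$ with $W$ a section of $\mathcal H\cap TN$ and using $X\theta_1=(\phi X)\theta_1=W\theta_1=0$, the identity $[X,\phi X]\theta_1=X\bigl((\phi X)\theta_1\bigr)-(\phi X)(X\theta_1)=0$ forces $\mu\,(\zeta\theta_1)=0$, hence $\zeta\theta_1=0$. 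Together with $X\theta_1=0$ for horizontal $X$ and $TN=(\mathcal H\cap TN)\oplus\mathbb R\zeta$ over $U$, this yields $d\theta_1=0$ on $U$.

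It remains to globalize. On each connected component $U_0$ of $U$ the function $\theta_1$ equals a constant $c_0\in[0,\pi/2)$, so $\Vert Z_2^\bot\Vert=\cos c_0>0$ on $U_0$; since $p\mapsto\Vert Z_2^\bot(p)\Vert$ is continuous on all of $N$, it stays $\ge\cos c_0>0$ on the closure $\overline{U_0}$ in $N$, so $\overline{U_0}\subseteq U$, and therefore $\overline{U_0}=U_0$ (being a connected subset of $U$ containing the component $U_0$). Hence $U_0$ is open and closed in the connected manifold $N$, so $U_0=N$. Thus $U=N$, the field $Z_2$ is everywhere transverse to $N$, and $\theta_1\equiv c_0$ is constant with $0\le\theta_1<\pi/2$.

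I expect the main obstacle to be the vanishing of $\zeta\theta_1$: a direct computation of $\nabla_\zeta\zeta$ merely reproduces the definition of $\zeta\theta_1$, and one genuinely needs the observation that $[X,\phi X]$ has a nonzero $\zeta$-component --- so that $\mathcal H\cap TN$ together with its first bracket already spans $TN$ --- in order to pass from ``$X\theta_1=0$ for all horizontal tangent $X$'' to ``$\zeta\theta_1=0$''. A secondary, purely bookkeeping point is justifying that the covariant derivatives of the vector fields ``along $N$'' appearing above may be computed from horizontal ambient extensions, so that Lemmas~\ref{nablaXZi} and~\ref{nablaZiX} apply verbatim.
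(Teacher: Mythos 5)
Your proof is correct. For the local constancy of $\theta_1$ on the open set $U$ where $Z_2$ is transverse, your argument is essentially the paper's Stage 1: the identity $X\theta_1=g(\B(X,\zeta),\eta)=g(\B(\zeta,X),\eta)=g(\nabla_\zeta X,\eta)=0$ is the same computation the paper performs via $g(\nabla_X\zeta,J\zeta)=g(\nabla_\zeta X,J\zeta)+g([X,\zeta],J\zeta)$ (symmetry of the second fundamental form is exactly torsion-freeness plus tangency of $[X,\zeta]$), and your derivation of $\zeta\theta_1=0$ from the nonzero $\zeta$-component of $[X,\phi X]$ is identical to the paper's Step 3, including the correct observation that the coefficient $\mu$ is positive for $X\neq 0$. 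Where you genuinely diverge is the globalization. The paper takes the complement $N''$ (points where $Z_2$ is tangent), introduces the second angle function $\theta_2$ on a neighbourhood of $N''$, re-runs the whole local-constancy argument for $\theta_2$, and concludes that $N''=\{\theta_2=0\}$ is open, hence empty by connectedness. You instead exploit the quantitative output of Stage 1 directly: $\theta_1\equiv c_0$ on a component $U_0$ of $U$ forces $\Vert Z_2^\bot\Vert\equiv\cos c_0>0$ there, and continuity of $\Vert Z_2^\bot\Vert$ on all of $N$ then shows $\overline{U_0}\subseteq U$, so $U_0$ is closed as well as open. This is more economical --- you never need $\theta_2$ or a second pass through the argument --- and it isolates the geometric reason the transversality locus has no boundary in $N$: the angle is locked at $c_0$ and so cannot drift to $\pi/2$. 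The minor bookkeeping point you flag (computing $\nabla_\zeta X$ from a horizontal ambient extension of $X$ so that Lemma~\ref{nablaZiX} applies) is handled correctly and is the same implicit step the paper takes.
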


Actually for the leaning case the four angles $\theta_1$, $\theta_2$, $\theta^\prime_1$ and $\theta^\prime_2$ described above are all constant. 
For $\theta_1=0$, at each point $Z_1$ is tangent and $Z_2$ is orthogonal to the submanifold. 
As a matter of fact $Z_2$ must be transverse to $N$ because of the odd dimension and then orthogonal to $N$ by Proposition \ref{the4areverticalnobothorthogonal}-(\ref{orthogonaltangent}).
For example this is the case for the leaves of the charateristic foliation $\mathcal{F}_2$. 
\begin{proof}
The proof will be given in two stages.
Let $N^\prime$ be the nonempty open set of points of $N$ on which $Z_2$ is transverse to $N$. The function $\theta_1$ is well defined on $N^\prime$ and we have obviously $0\leq\theta_1<\pi/2$.
We will first prove that $\theta_1$  is locally constant on $N^\prime$ (Stage 1). Next we will prove that $N^\prime$ is nothing but $N$ (Stage 2).

\vspace{0,2 cm}
\noindent Stage 1, step 1:\newline
Let $X$ be any (local) unit vector field of $N^\prime$ orthogonal to $\zeta$.
Then it is horizontal because it is also orthogonal to $J\zeta$. Since $\zeta$ is unit $g(\nabla_X \zeta,\zeta)=0$.
We also have
$$
g(\nabla_X \zeta,J\zeta)=g(\nabla_{\zeta}X,J\zeta)+g([X,\zeta],J\zeta)=0
$$
since $\nabla_{\zeta}X$ is horizontal by Lemma \ref{nablaZiX}, and $[X,\zeta]$ is tangent to $N^\prime$.
Because $\zeta$ and $J\zeta$ span the vertical bundle, $\nabla_X \zeta$ is horizontal along $N^\prime$.

\vspace{0,2 cm}
\noindent Stage 1, step 2:\newline
Differentiating $\zeta= (\cos \theta_1) Z_1+(\sin \theta_1) Z_2$, we obtain
\begin{equation}\label{equationnablaXzeta}
\nabla_X \zeta=X(\theta_1)J\zeta + \cos \theta_1 \nabla_X Z_1 +\sin\theta_1 \nabla_X Z_2.
\end{equation}
By step 1 and Lemma \ref{nablaXZi} we have that $\nabla_X \zeta$, $\nabla_X Z_1$ and $\nabla_X Z_2$ are horizontal, then 
\eqref{equationnablaXzeta} implies that along $N^\prime$
$$
X(\theta_1)=0.
$$

\vspace{0,2 cm}
\noindent Stage 1, step 3:\newline
Take an $X$ as above. The dimension of $N^\prime$ being $\geq 3$ such an $X$ always exists.
By the $\phi$-invariance of $N^\prime$, $\phi X$ and $[X,\phi X]$ are also tangent to the submanifold. 
Using the fact that $X$ and $\phi X$ are horizontal, we have 
$$
g(Z_1+Z_2, [X,\phi X])=(\alpha_1+\alpha_2)([X,\phi X])=-2(d\alpha_1+d\alpha_2)(X,\phi X)=2g(X,X)=2. 
$$
Replacing $Z_1+Z_2$ by $(\cos \theta_1+\sin \theta_1)\zeta+(\cos \theta_1-\sin \theta_1)J\zeta$ and using $0\leq\theta_1<\pi/2$ we obtain
$$
g([X,\phi X], \zeta)=\frac{2}{ \cos \theta_1+\sin \theta_1}
$$
because $J\zeta$ is orthogonal to the submanifold. Then we have
\begin{equation}\label{crochetXfiX}
[X,\phi X]=\frac{2}{ \cos \theta_1+\sin \theta_1}\zeta +Y
\end{equation}
for some $Y$ horizontal and tangent to the submanifold. By step 1 since $X$, $\phi X$ and $Y$ are horizontal and tangent to $N^\prime$, we get $X(\theta_1)=\phi X(\theta_1)=0$ then $[X,\phi X](\theta_1)=0$, and also $Y(\theta_1)=0$.
Hence \eqref{crochetXfiX} implies that along $N^\prime$
$$
\zeta	(\theta_1)=0.
$$

Finally by steps 2 and 3 the function $\theta_1$ is locally constant on the open set $N^\prime$.

\vspace{0,2 cm}
\noindent Stage 2:\newline
 Let $N^{\prime\prime}$ be the complement set of the open set $N^\prime$ in $N$, that is the set of the points of $N$ where $Z_2$ is tangent to $N$.
 At these points $Z_1$ is not tangent to $N$, and the function $\theta_2$ is well defined on an open set of $N$ containing $N^{\prime\prime}$. By the same arguments as before, $\theta_2$ is locally constant, giving that $N^{\prime\prime}$ is open because it consists on the vanishing points of $\theta_2$.
 By the connectedness of $N$ we have that $N^{\prime\prime}$ is empty and
 $$N=N^\prime.$$

Now the function $\theta_1$ is well defined and constant on the whole $N$, and of course $Z_2$ is everywhere transverse to the submanifold, completing the proof.
\end{proof}

A first immediate consequence of our Lemma \ref{lemmeconstant}, using Proposition \ref{proposition2caspairimpair}, is the following  theorem which describes all possible relative positions of a $\phi$-invariant submanifold with respect to both the Reeb vector fields.

\begin{theorem}\label{theoremclassification}
Let $N$ be a connected $\phi$-invariant submanifold of a metric contact pair with decomposable $\phi$.  
Then $N$ satisfies one of the following properties:
\begin{enumerate}
\item \label{caseeven} $N$ is even-dimensional and tangent to both Reeb vector fields.
\item \label{casedim1}  $N$ is $1$-dimensional and contained in one of the $2$-dimensional leaves of the vertical foliation.
\item \label{caseodd1tangent} $N$ is of odd dimension $\geq 3$ everywhere tangent to one Reeb vector field $Z_1$ and orthogonal to the other one $Z_2$, or vice versa.
\item \label{caseoddleaning} $N$ is of odd dimension $\geq 3$, nowhere tangent and nowhere orthogonal to the Reeb vector fields making two constant angles with them.
\end{enumerate}
\end{theorem}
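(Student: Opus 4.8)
The plan is to derive Theorem~\ref{theoremclassification} directly by combining Proposition~\ref{proposition2caspairimpair} with Lemma~\ref{lemmeconstant} (and the elementary discussion of the $1$-dimensional case already given before Theorem~\ref{theoremleaning}). First I would split into cases according to whether $N$ is tangent to both Reeb vector fields everywhere. If it is, then Proposition~\ref{proposition2caspairimpair} immediately gives that $N$ is even-dimensional, which is case~(\ref{caseeven}). If the dimension of $N$ is $1$, then as noted in the text $N$ is an arbitrary $1$-dimensional submanifold of a $2$-dimensional leaf of the vertical foliation $\mathcal{V}$, which is case~(\ref{casedim1}). So it remains to treat the situation in which $N$ has odd dimension $\geq 3$ and is \emph{not} tangent to both Reeb fields at every point.

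In that remaining situation, there is a point where one Reeb field, say $Z_2$, is not tangent to $N$. By Lemma~\ref{lemmeconstant}, $Z_2$ is then everywhere transverse to $N$, the angle function $\theta_1$ that $Z_1$ makes with $N$ is constant, and $0\leq\theta_1<\pi/2$. I would then distinguish the value $\theta_1=0$ from $\theta_1>0$. If $\theta_1=0$, then at every point $\zeta=Z_1$, so $Z_1$ is everywhere tangent to $N$; since $N$ is odd-dimensional and tangent to $Z_1$, by Proposition~\ref{proposition2caspairimpair} its tangent bundle meets the Reeb distribution in exactly a line (namely $\mathbb{R}Z_1$), so $Z_2$ is transverse, and by Proposition~\ref{the4areverticalnobothorthogonal}-(\ref{orthogonaltangent}) $Z_2$ is in fact orthogonal to $N$ at every point. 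This is case~(\ref{caseoddleaning}) wait—this is case~(\ref{caseoddleaning})? No: it is case~(\ref{caseoddleaning})'s complement, i.e.\ case~(\ref{caseoddleaning}) corresponds to $\theta_1>0$; the present subcase $\theta_1=0$ is case~(\ref{caseoddleaning})... let me restate: $\theta_1=0$ gives case~(\ref{caseoddleaning})? It gives $N$ tangent to $Z_1$ and orthogonal to $Z_2$, which is exactly case~(\ref{caseoddleaning})? That is case~(\ref{caseoddleaning}); no, case~(\ref{caseoddleaning}) says \emph{nowhere} tangent, so $\theta_1=0$ is case~(\ref{caseoddleaning})—I mean case~(\ref{caseoddleaning}). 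To avoid confusion in the final write-up: $\theta_1=0$ yields case~(\ref{caseoddleaning}), the "tangent to one Reeb, orthogonal to the other" case, after swapping the roles of $Z_1$ and $Z_2$ if it had been $Z_1$ that was non-tangent at the starting point.

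If instead $\theta_1>0$, then by Lemma~\ref{lemmeconstant} $Z_1$ is nowhere tangent to $N$ (its tangential part $Z_1^T=\|Z_1^T\|\,\zeta$ never equals $Z_1$ since $\cos\theta_1\neq 1$), and $Z_1$ is nowhere orthogonal to $N$ (since $\cos\theta_1\neq 0$); likewise $Z_2$ is everywhere transverse, hence nowhere tangent, and by Proposition~\ref{oneorthogonalonetangent} it is nowhere orthogonal either (an orthogonal $Z_2$ would force $Z_1$ tangent). Thus $N$ is leaning with two constant angles $\theta_1$ and, by Theorem~\ref{theoremleaning}(2), $\theta_2=\theta_1-\pi/2$, also constant — this is case~(\ref{caseoddleaning}). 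I do not expect any serious obstacle here: the theorem is essentially a bookkeeping corollary, and the only point requiring care is organizing the case analysis so that the symmetric roles of $Z_1$ and $Z_2$ are handled cleanly and the borderline value $\theta_1=0$ is correctly assigned to case~(\ref{caseoddleaning}) rather than to the leaning case. The mild subtlety is verifying that "not tangent to both Reeb fields somewhere" together with connectedness and Lemma~\ref{lemmeconstant} really does propagate to the stated global dichotomy, but Lemma~\ref{lemmeconstant} already packages exactly that propagation.

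\begin{proof}
If $N$ is tangent to both $Z_1$ and $Z_2$ at every point, Proposition~\ref{proposition2caspairimpair} gives that $N$ is even-dimensional, which is~(\ref{caseeven}). If $\dim N=1$, then as observed above $N$ is a $1$-dimensional submanifold of a $2$-dimensional leaf of $\mathcal{V}$, which is~(\ref{casedim1}). Assume now that $\dim N$ is odd, $\geq 3$, and that $N$ is not tangent to both Reeb vector fields at some point $p$. After possibly relabelling $Z_1$ and $Z_2$, we may assume $Z_2$ is not tangent to $N$ at $p$.

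By Lemma~\ref{lemmeconstant}, $Z_2$ is then everywhere transverse to $N$, the angle function $\theta_1$ of $Z_1$ with $N$ is constant, and $0\leq\theta_1<\pi/2$.

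If $\theta_1=0$, then $\zeta=Z_1$ at every point, so $Z_1$ is everywhere tangent to $N$ while $Z_2$, being transverse, is everywhere orthogonal to $N$ by Proposition~\ref{the4areverticalnobothorthogonal}-(\ref{orthogonaltangent}). This is case~(\ref{caseoddleaning}).

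If $0<\theta_1<\pi/2$, then since $Z_1^T=\Vert Z_1^T\Vert\,\zeta=\Vert Z_1^T\Vert\big((\cos\theta_1)Z_1+(\sin\theta_1)Z_2\big)$ with $\cos\theta_1\neq 1$ and $\cos\theta_1\neq 0$, the vector field $Z_1$ is nowhere tangent and nowhere orthogonal to $N$. Likewise $Z_2$ is everywhere transverse, hence nowhere tangent, and cannot be orthogonal to $N$ anywhere, for otherwise Proposition~\ref{oneorthogonalonetangent} would force $Z_1$ to be tangent there. Therefore $N$ is leaning, and by Theorem~\ref{theoremleaning}(2) the angle $\theta_2$ satisfies $\theta_2=\theta_1-\pi/2$, so both angles are constant. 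This is case~(\ref{caseoddleaning}).
\end{proof}
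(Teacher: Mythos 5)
Your proof is correct and follows exactly the route the paper intends: the paper states this theorem as an immediate consequence of Lemma~\ref{lemmeconstant} together with Proposition~\ref{proposition2caspairimpair} without writing out the case analysis, and your case split (even-dimensional/tangent, $1$-dimensional, $\theta_1=0$, $0<\theta_1<\pi/2$) is precisely that bookkeeping. The only flaw is clerical: in the $\theta_1=0$ subcase the conclusion should read ``This is case~(\ref{caseodd1tangent})'' rather than case~(\ref{caseoddleaning}) --- the mathematics there ($Z_1$ everywhere tangent, $Z_2$ everywhere orthogonal) is right, but the label you cite is the leaning case.
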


\begin{remark}
Except the $1$-dimensional case, a $\phi$-invariant submanifold of a metric contact pair with decomposable $\phi$ always makes a constant angle with each of the two Reeb vector fields.
\end{remark}
\section{Minimality}  

\noindent We now turn to our main result on minimality of $\phi$-invariant submanifolds of normal metric contact pairs with orthogonal characteristic foliations.

\begin{theorem}\label{corollaireinvariantisminimal}
Any $\phi$-invariant submanifold of dimension $\geq 2$ of a normal metric contact pair with decomposable $\phi$ is minimal. 
\end{theorem}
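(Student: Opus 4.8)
The plan is to combine the classification of Theorem~\ref{theoremclassification} with one trace computation. Discarding the $1$-dimensional case, $N$ is of one of three types: (a) even-dimensional and tangent to both $Z_1,Z_2$; (b) odd-dimensional, tangent to one Reeb field and orthogonal to the other; (c) odd-dimensional and leaning. In each case $TN=(\mathcal{H}\cap TN)\oplus(T\mathcal{V}\cap TN)$, with $\mathcal{H}\cap TN$ a $\phi$-invariant subbundle of even rank $2n$. I would first record that $\nabla_{Z_i}Z_j=0$ (a one-line deduction from Lemmas~\ref{nablaXZi} and~\ref{nablaZiX}), and that in cases~(b) and~(c) the unit field spanning $T\mathcal{V}\cap TN$ is $\zeta=(\cos\theta_1)Z_1+(\sin\theta_1)Z_2$ with $\theta_1$ \emph{constant} by Lemma~\ref{lemmeconstant}, so that $\nabla_\zeta\zeta=0$ as well. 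Hence the vertical part of $TN$ is totally geodesic in $M$ and drops out of the trace of the second fundamental form $B$: for an orthonormal frame $\{e_i\}_{i=1}^{2n}$ of $\mathcal{H}\cap TN$, the mean curvature vector of $N$ is $H=\sum_{i=1}^{2n}B(e_i,e_i)$.

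The argument then rests on three facts. (i) The Reeb vector fields of a \emph{normal} metric contact pair are Killing --- such a pair is a $\mathcal{K}$-manifold in the sense of Blair, Ludden and Yano, its fundamental $2$-form being the closed form $d\alpha_1+d\alpha_2$. Using this and the constancy of $\theta_1$, one checks $g(H,Z_j)=0$, so $H$ is horizontal (in case~(a) this is immediate, $Z_j$ being tangent to $N$). (ii) The normal bundle of $N$ is $\phi$-invariant: if $v\bot TN$ and $w\in TN$ then $g(\phi v,w)=-g(v,\phi w)=0$ because $\phi w\in TN$. (iii) For horizontal $X,Y$ the tensor $(\nabla_X\phi)Y$ is vertical: the K\"ahler form of $J$ being $\Omega_J=d\alpha_1+d\alpha_2-\alpha_1\wedge\alpha_2$, its differential $-d(\alpha_1\wedge\alpha_2)$ vanishes on triples of horizontal vectors, and since $J$ is integrable the standard identity for $\nabla J$ gives $(\nabla_X J)Y\bot\mathcal{H}$; as $\nabla_X Z_i$ is horizontal (Lemma~\ref{nablaXZi}), the difference $(\nabla_X J)Y-(\nabla_X\phi)Y$ is vertical for $Y$ horizontal.

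Now I would pair the frame as $\{e_i\}=\{v_j,\phi v_j\}_{j=1}^{n}$ and, for a section $v$ of $\mathcal{H}\cap TN$, expand $\nabla_{\phi v}(\phi v)=(\nabla_{\phi v}\phi)v+\phi\nabla_{\phi v}v$ and $\nabla_v(\phi v)=(\nabla_v\phi)v+\phi\nabla_v v$; taking normal components and using~(ii) together with $\phi(TN)\subset TN$ gives
$$
B(\phi v,\phi v)=[(\nabla_{\phi v}\phi)v]^\bot+\phi\,[(\nabla_v\phi)v]^\bot+\phi^2 B(v,v).
$$
Summing over $j$ and using $\phi^2=-\id+\alpha_1\otimes Z_1+\alpha_2\otimes Z_2$,
$$
H=\sum_{j=1}^{n}\bigl(B(v_j,v_j)+\phi^2 B(v_j,v_j)\bigr)+\sum_{j=1}^{n}[(\nabla_{\phi v_j}\phi)v_j]^\bot+\phi\sum_{j=1}^{n}[(\nabla_{v_j}\phi)v_j]^\bot.
$$
Each term $B(v_j,v_j)+\phi^2 B(v_j,v_j)=\alpha_1(B(v_j,v_j))Z_1+\alpha_2(B(v_j,v_j))Z_2$ is vertical; each $[(\nabla_{\phi v_j}\phi)v_j]^\bot$ is vertical by~(iii); and $\phi\sum_j[(\nabla_{v_j}\phi)v_j]^\bot=0$ because by~(iii) the vector inside the sum is vertical and $\phi$ annihilates $T\mathcal{V}$. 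Thus $H$ is vertical; being also horizontal by~(i), we conclude $H=0$, i.e.\ $N$ is minimal. (In case~(a), where $T\mathcal{V}\subset TN$, the terms $[(\nabla\phi)\,\cdot\,]^\bot$ vanish outright and the identity collapses to the familiar K\"ahler-type cancellation $B(\phi v,\phi v)=-B(v,v)$.)

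The steps I expect to require the most care --- and which I would extract as separate lemmas --- are fact~(iii) (fixing the sign convention in the identity for $\nabla J$ and checking that the relevant $3$-form vanishes on horizontal triples) and the Killing property of the Reeb fields for \emph{all} types $(h,k)$, not merely $(h,0)$; these are the only places where normality is used. Everything else is the bookkeeping above, carried out uniformly over cases~(a), (b) (the limiting values $\theta_1=0$ or $\theta_2=0$) and~(c).
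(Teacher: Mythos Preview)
Your argument is correct but takes a genuinely different route from the paper's.  The paper's proof is brief and modular: after invoking the classification of Theorem~\ref{theoremclassification}, it dispatches each of the three cases by quoting a result already established in \cite{BH5} --- case~(a) via the minimality criterion for $J$-invariant submanifolds tangent to the Reeb distribution, case~(b) directly, and the leaning case~(c) via the criterion ``minimal if and only if the angle $\theta_1$ is constant along the flow of $Z_1^T$'', which Lemma~\ref{lemmeconstant} then supplies.  Your approach instead carries out a single direct mean-curvature computation, unified across the three cases, and is essentially self-contained modulo the Killing property of the Reeb fields (cited in the paper from \cite{BBH}) and Lemma~\ref{lemmeconstant}; the analytic heart is your fact~(iii), that $(\nabla_X\phi)Y$ is vertical for horizontal $X,Y$, which you extract from the Hermitian identity for $\nabla J$ together with the vanishing of $d\Omega_J=-d\alpha_1\wedge\alpha_2+\alpha_1\wedge d\alpha_2$ on horizontal triples.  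What you gain is independence from the machinery of \cite{BH5} and a transparent accounting of exactly where normality enters (only through the Killing property in step~(i) and through the integrability of $J$ in step~(iii)); what the paper gains is brevity, by outsourcing that analysis to the earlier reference.  One small remark: in your justification that $(\nabla_XJ)Y-(\nabla_X\phi)Y$ is vertical for horizontal $Y$, the relevant point is simply that $\alpha_i(Y)=0$ kills the $\alpha_i(Y)\nabla_XZ_j$ terms, leaving $-(\nabla_X\alpha_2)(Y)Z_1+(\nabla_X\alpha_1)(Y)Z_2$; the appeal to Lemma~\ref{nablaXZi} is not actually needed there.
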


\begin{proof}
Consider a connected $\phi$-invariant submanifold $N$  
of a normal metric contact pair with decomposable $\phi$. 
When the dimension of $N$ is $\geq 2$, $N$ satisfies one of the cases (\ref{caseeven}), (\ref{caseodd1tangent}) or (\ref{caseoddleaning})
enumerated in Theorem \ref{theoremclassification}.
Take then the question case-by-case and use some partial results from \cite{BH5} to conclude.
\noindent
Assume that $N$ has even dimension (Case \ref{caseeven}). Then it is tangent to both the Reeb vector fields and one can readily show that $N$ is also $J$-invariant. Moreover the normality of the metric contact pair implies the integrability of $J$. 
By \cite{BH5}, when $J$ is integrable a $J$-invariant submanifold is minimal if and only if it is tangent to the Reeb distribution. So this applies to $N$ and then $N$ is minimal.

Another result from \cite{BH5} states that, on a normal metric contact pair with decomposable $\phi$, a $\phi$-invariant submanifold tangent to one Reeb vector field and orthogonal to the other one (Case \ref{caseodd1tangent}) is minimal.

For the very remaining possible case (Case 4) i.e. when $N$ is leaning, we use the following.
On a normal metric contact pair with decomposable $\phi$, 
a $\phi$-invariant leaning submanifold is minimal if and only if the angle between one Reeb vector field, say $Z_1$, and its tangential  part $Z_1^T$ is constant along the line curves of $Z_1^T$ (see \cite{BH5}). Now by Lemma \ref{lemmeconstant}, for our case this angle is constant on the whole $N$, and then $N$ is minimal.
\end{proof}

Observe that by Proposition \ref{proposition2caspairimpair}, a connected $1$-dimensional submanifold is $\phi$-invariant if and only if it is contained in one leaf of the vertical foliation $\mathcal{V}$. This foliation is totally geodesic and the geodesics are integral curves of nonzero vertical vector fields $c_1Z_1+c_2 Z_2$ with $c_i$ constant functions, since we have $\nabla_{Z_i}Z_j=0$ for $i,j=1,2$ (see \cite{BH2}). 
Hence for the Case \ref{casedim1} of Theorem \ref{theoremclassification} we have

\begin{theorem}
A connected $1$-dimensional $\phi$-invariant submanifold of a metric contact pair is minimal if and only if it is tangent to a vector field of the form $c_1Z_1+c_2Z_2$ with $c_i$ real numbers.
\end{theorem}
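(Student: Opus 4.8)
\noindent The plan is to reduce the statement to two standard facts: a one-dimensional submanifold is minimal precisely when it is a geodesic, and on a metric contact pair one has $\nabla_{Z_i}Z_j=0$ for $i,j=1,2$ (see \cite{BH2}), so that the vertical foliation $\mathcal{V}$ is totally geodesic. First I would recall that by Proposition \ref{proposition2caspairimpair} a connected one-dimensional $\phi$-invariant submanifold $N$ is contained in a single leaf of $\mathcal{V}$, hence $TN\subseteq\mathbb{R}Z_1\oplus\mathbb{R}Z_2$ along $N$. The only computation needed is the elementary one that for vertical vector fields $V=aZ_1+bZ_2$ and $W$,
$$
\nabla_W V=W(a)\,Z_1+W(b)\,Z_2,
$$
since the terms $a\nabla_W Z_1$ and $b\nabla_W Z_2$ drop out by $\nabla_{Z_i}Z_j=0$.

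For the ``if'' direction, if $N$ is an integral curve of $V=c_1Z_1+c_2Z_2$ with $c_1,c_2\in\mathbb{R}$, then the formula above gives $\nabla_V V=0$, so $N$ is a reparametrized geodesic and hence minimal.

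For the ``only if'' direction I would argue that, $N$ being one-dimensional, its mean curvature vector is $(\nabla_e e)^\perp$ for any unit tangent field $e$, so minimality means exactly that $N$ is a geodesic. Parametrizing $N$ by arc length, write $\dot\gamma=e=aZ_1+bZ_2$, where $a^2+b^2\equiv1$ because $Z_1,Z_2$ are $g$-orthonormal (as $g(X,Z_i)=\alpha_i(X)$). By the displayed formula $\nabla_{\dot\gamma}\dot\gamma=\dot a\,Z_1+\dot b\,Z_2$, with dots denoting derivatives along $\gamma$, and the geodesic equation then forces $\dot a=\dot b=0$. Thus $a$ and $b$ are constant along $N$, and $N$ is tangent to $c_1Z_1+c_2Z_2$ with $(c_1,c_2)=(a,b)\in\mathbb{R}^2$.

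There is no real obstacle in this argument; the only point needing care is to pass to an arc-length parametrization before invoking the geodesic equation, since that is what makes $\nabla_{\dot\gamma}\dot\gamma$ orthogonal to $\dot\gamma$ and hence lets one conclude that the coefficients $a,b$ are genuinely constant rather than merely proportional to $(a,b)$.
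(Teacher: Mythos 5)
Your proof is correct and follows essentially the same route as the paper: reduce to the fact that $N$ lies in a leaf of the totally geodesic vertical foliation $\mathcal{V}$ (via Proposition \ref{proposition2caspairimpair}), use $\nabla_{Z_i}Z_j=0$ to see that the vertical geodesics are exactly the integral curves of $c_1Z_1+c_2Z_2$ with constant coefficients, and identify minimality of a curve with the geodesic condition. Your write-up is in fact more explicit than the paper's (which compresses all of this into the sentence preceding the theorem), and your remark about passing to an arc-length parametrization is a legitimate point of care.
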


This means that minimal $1$-dimensional $\phi$-invariant submanifolds are exactly vertical geodesics.

\section{Induced structures on $\phi$-invariant submanifolds}

\noindent In general, an even-dimensional $\phi$-invariant submanifold of a metric contact pair does not inherit necessarily a contact pair structure \cite{BH5}. Anyway we can observe that it still carries some interesting structure.

\begin{proposition}
On a metric contact pair  a $\phi$-invariant submanifold $N$ tangent to both Reeb vector fields carries a metric $f$-structure with two complemented frames. A normal metric contact pair on the ambient manifolds induces a $\mathcal{K}$-structure on $N$.
\end{proposition}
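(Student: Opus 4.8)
The plan is to produce explicitly the $f$-structure together with its two complemented global frames and then identify the $\mathcal{K}$-structure condition with the normality already known on the ambient manifold. Since $N$ is even-dimensional and $\phi$-invariant, by Proposition \ref{proposition2caspairimpair} and Theorem \ref{theoremclassification}-(\ref{caseeven}) it is tangent to both $Z_1$ and $Z_2$; hence $Z_1^T=Z_1$, $Z_2^T=Z_2$ and $\alpha_i$ restricts to $1$-forms $\bar\alpha_i$ on $N$ with $\bar\alpha_i(Z_j)=\delta_{ij}$. First I would define $\bar\phi:TN\to TN$ to be the restriction of $\phi$, which makes sense precisely because $\phi(TN)\subset TN$. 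Because $\phi^2=-\id+\alpha_1\otimes Z_1+\alpha_2\otimes Z_2$ on $M$ and $Z_1,Z_2\in TN$, the same identity holds on $N$, giving $\bar\phi^{\,3}+\bar\phi=0$ with $\bar\phi Z_i=0$ and $\bar\alpha_i\circ\bar\phi=0$; this is exactly an $f$-structure of rank $\dim N-2$ with the two complemented frames $(Z_1,\bar\alpha_1)$, $(Z_2,\bar\alpha_2)$ in the sense of Yano. Compatibility of the induced metric $\bar g=g|_N$ is inherited verbatim from the compatibility of $g$ on $M$, since the identity $g(\phi X,\phi Y)=g(X,Y)-\alpha_1(X)\alpha_1(Y)-\alpha_2(X)\alpha_2(Y)$ only involves vectors tangent to $N$ when $X,Y\in TN$. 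So $N$ carries a metric $f$-structure with two complemented frames.

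For the second assertion I would recall (as noted in the Remark after Example \ref{H6}) that a $\mathcal{K}$-structure in the sense of Blair--Ludden--Yano is a normal metric $f$-structure with complemented frames whose associated fundamental $2$-form is closed. Normality of the induced structure: the ambient normality means $N^1\equiv 0$ on $M$, i.e. $[\phi,\phi](X,Y)+2d\alpha_1(X,Y)Z_1+2d\alpha_2(X,Y)Z_2=0$. Restricting $X,Y$ to vectors tangent to $N$, the Nijenhuis bracket $[\bar\phi,\bar\phi]$ of the induced endomorphism is the restriction of $[\phi,\phi]$ (all four brackets in the Nijenhuis expression stay tangent to $N$, using $\phi$-invariance and the involutivity of $TN$), and $d\alpha_i$ restricts to $d\bar\alpha_i$; hence the induced $\bar N^1$ vanishes, which is exactly the normality of the induced $f$-structure. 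For closedness of the fundamental form: on $M$ the associated metric gives $g(X,\phi Y)=(d\alpha_1+d\alpha_2)(X,Y)$, so the fundamental $2$-form of the ambient structure is $d\alpha_1+d\alpha_2$, which is obviously closed; its pullback to $N$ is $d\bar\alpha_1+d\bar\alpha_2$, the fundamental form of the induced structure, and is therefore closed. Together with the induced normality this gives a $\mathcal{K}$-structure on $N$.

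The only point requiring a little care is checking that $TN$ being $\phi$-invariant and involutive really does force every term of the Nijenhuis tensor $[\phi,\phi](X,Y)=\phi^2[X,Y]+[\phi X,\phi Y]-\phi[\phi X,Y]-\phi[X,\phi Y]$ to stay tangent to $N$ for $X,Y\in\Gamma(TN)$; this is immediate once one observes that $[X,Y]$, $[\phi X,Y]$, $[X,\phi Y]$ and $[\phi X,\phi Y]$ are all tangent to $N$ (involutivity, $\phi$-invariance) and that $\phi$ and $\phi^2$ preserve $TN$. Given that, the identification $\bar N^1 = N^1|_{TN\times TN}=0$ is automatic, and no genuine obstacle arises — the statement is essentially a restriction argument. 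I would therefore present the proof in the compact form above: build $\bar\phi,\bar\alpha_i,Z_i,\bar g$ by restriction, verify the $f$-structure axioms and compatibility by restricting the ambient identities, and then deduce the $\mathcal{K}$-structure from restriction of $N^1=0$ and of the closed form $d\alpha_1+d\alpha_2$.
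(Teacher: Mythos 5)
Your restriction argument is correct and is precisely the ``straightforward computation'' that the paper declares and omits: restrict $\phi$, $\alpha_i$, $Z_i$, $g$ to $N$, check the $f$-structure axioms and metric compatibility from the ambient identities, and obtain normality and closedness of the fundamental form (the pullback of $d\alpha_1+d\alpha_2$) by restricting $N^1=0$ and using $\phi$-invariance plus involutivity of $TN$ so that the Nijenhuis tensor restricts. The only cosmetic point is at the start: tangency to both Reeb vector fields is the hypothesis of the proposition and even-dimensionality its consequence (Proposition \ref{proposition2caspairimpair}), not the other way around.
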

Here the notion of $f$-structure is meant in the sense of Yano \cite{yano} with $f=\phi$ restricted to $N$, and the complemented frames are
the restrictions of the Reeb vector fields to $N$.
For the definition of $\mathcal{K}$-structure see \cite{BLY1973}.
The proof of this proposition is a straightforward computation and will be
omitted.

For the odd-dimensional case, when $\phi$ is decomposable, a $\phi$-invariant submanifold tangent to one Reeb vector field and orthogonal to the other one inherits a contact metric structure, and it is Sasakian when the metric contact pair is normal  \cite{BLY}.   
The following statement concerns the remaining class of $\phi$-invariant submanifolds, i.e. those which are leaning.

\begin{theorem}\label{inducedstructure}
Let $(M, \alpha_1, \alpha_2, \phi, g)$ be a metric contact pair with decomposable $\phi$, and $N$ a $\phi$-invariant submanifold of $M$ of dimension $\geq3$ nowhere tangent and nowhere orthogonal to the Reeb vector fields $Z_1$ and $Z_2$.
Set $
\zeta=\frac{1}{\Vert Z_1^T\Vert}Z_1^T
$ and $\omega=g(\zeta,\cdot )$ along $N$, where $Z_1^T$ is the tangential part of $Z_1$.
Then 
\begin{enumerate}
\item $\omega$ induces  
a contact form on $N$ with Reeb vector field $\zeta$,
\item \label{compatible} $( \phi ,   \zeta, \omega, g)$ induces  
an almost contact metric structure on $N$, which is not contact metric.
\end{enumerate}
If the metric contact pair is normal, the induced almost contact structure on $N$ is normal.
\end{theorem}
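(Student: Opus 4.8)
\medskip
\noindent\textbf{Proof idea.} The plan is to produce the structure on $N$ by restricting the ambient tensors, to read off the contact condition from Theorem~\ref{theoremleaning}, and finally to transport the normality of $M$ to $N$ by a pullback argument. The single fact I would quote at the outset is Lemma~\ref{lemmeconstant}: the angle $\theta_1$ is constant with $0<\theta_1<\pi/2$, so along $N$ we have $\zeta=(\cos\theta_1)Z_1+(\sin\theta_1)Z_2$ and $J\zeta=-(\sin\theta_1)Z_1+(\cos\theta_1)Z_2$, where $J\zeta$ is a unit section normal to $N$ and $TN=\mathbb{R}\zeta\oplus(\mathcal{H}\cap TN)$ is an orthogonal splitting preserved by $\phi$; also $\dim N=2n+1$ with $\dim(\mathcal{H}\cap TN)=2n$. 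Since each $Z_i^T$ is vertical and tangent to $N$ it is a multiple of $\zeta$, namely $Z_1^T=(\cos\theta_1)\zeta$, $Z_2^T=(\sin\theta_1)\zeta$, whence for $X$ tangent to $N$ one gets $\alpha_1(X)=(\cos\theta_1)\omega(X)$ and $\alpha_2(X)=(\sin\theta_1)\omega(X)$. These two identities are the algebraic engine of the whole argument.

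First I would check the almost contact metric claim. Writing $f=\phi|_{TN}$ we have $f\zeta=0$ and $\omega(\zeta)=1$, and inserting the two identities above into $\phi^2=-\id+\alpha_1\otimes Z_1+\alpha_2\otimes Z_2$ and into $g(\phi X,\phi Y)=g(X,Y)-\alpha_1(X)\alpha_1(Y)-\alpha_2(X)\alpha_2(Y)$ should collapse, using $\cos^2\theta_1+\sin^2\theta_1=1$, to $f^2=-\id+\omega\otimes\zeta$ and $g(fX,fY)=g(X,Y)-\omega(X)\omega(Y)$ on $TN$, giving the almost contact metric structure. For the contact statement I would observe that constancy of $\theta_1$ yields $\omega=\iota^*\big((\cos\theta_1)\alpha_1+(\sin\theta_1)\alpha_2\big)$ for the inclusion $\iota\colon N\into M$, hence $d\omega=\iota^*\big((\cos\theta_1)d\alpha_1+(\sin\theta_1)d\alpha_2\big)$; then $i_\zeta d\omega=0$ follows from $i_{Z_j}d\alpha_i=0$, while for a nonzero horizontal $X$ tangent to $N$, splitting $X=X_1+X_2$ along the two characteristic directions as in property~\ref{anglevecteurhorizontal} of Theorem~\ref{theoremleaning} and using $d\alpha_1(X_1,\phi X_1)=-\Vert X_1\Vert^2$, $d\alpha_2(X_2,\phi X_2)=-\Vert X_2\Vert^2$ together with $\Vert X_2\Vert^2/\Vert X_1\Vert^2=\tan\theta_1$ gives
\[
d\omega(X,\phi X)=-(\cos\theta_1)\Vert X_1\Vert^2-(\sin\theta_1)\Vert X_2\Vert^2=-\frac{\Vert X\Vert^2}{\cos\theta_1+\sin\theta_1}\neq 0 .
\]
Since $\phi$ preserves $\mathcal{H}\cap TN$, this forces $d\omega$ to be nondegenerate on $\mathcal{H}\cap TN$, so $\omega\wedge(d\omega)^n\neq 0$ and $\omega$ is a contact form with Reeb vector field $\zeta$. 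The same identity shows the structure is not contact metric, because that would require $d\omega(X,\phi X)=g(X,\phi^2X)=-\Vert X\Vert^2$ for horizontal $X$, whereas $\cos\theta_1+\sin\theta_1>1$ for $0<\theta_1<\pi/2$.

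For normality I would assume $N^1=[\phi,\phi](X,Y)+2d\alpha_1(X,Y)Z_1+2d\alpha_2(X,Y)Z_2=0$ on $M$, note that since $f$ agrees with $\phi$ on the involutive bundle $TN$ the Nijenhuis tensor of $f$ coincides with that of $\phi$ on vectors tangent to $N$, and then compute the normality tensor $[f,f](X,Y)+2d\omega(X,Y)\zeta$ for $X,Y$ tangent to $N$. Replacing $[\phi,\phi](X,Y)$ by $-2d\alpha_1(X,Y)Z_1-2d\alpha_2(X,Y)Z_2$ and inserting $d\omega=(\cos\theta_1)d\alpha_1+(\sin\theta_1)d\alpha_2$ and $\zeta=(\cos\theta_1)Z_1+(\sin\theta_1)Z_2$, the expression should reorganize, again via $\cos^2\theta_1+\sin^2\theta_1=1$, into a single multiple of $J\zeta$, namely $-2\,d\mu(X,Y)\,J\zeta$ with $\mu=-(\sin\theta_1)\alpha_1+(\cos\theta_1)\alpha_2=g(J\zeta,\cdot\,)$. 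Since $J\zeta\perp TN$ we have $\iota^*\mu=0$, hence $\iota^*d\mu=d(\iota^*\mu)=0$, i.e.\ $d\mu$ vanishes on pairs tangent to $N$, and the normality tensor of the induced structure vanishes.

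The step I expect to be the main obstacle is this last one: one must organize the bookkeeping of the two characteristic components carefully, justify cleanly that the Nijenhuis tensor of the restricted endomorphism equals the ambient one on $TN$ (tensoriality together with involutivity of $TN$), recognize the surviving vertical vector as proportional to $J\zeta$, and only then exploit $\iota^*\mu=0$ to kill it. Everything in parts~(1) and~(2) is essentially forced once the constancy of $\theta_1$ (Lemma~\ref{lemmeconstant}) and Theorem~\ref{theoremleaning} are available.
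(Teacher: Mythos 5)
Your proposal is correct and follows essentially the same route as the paper: constancy of $\theta_1$ via Lemma \ref{lemmeconstant}, the identity $\omega=(\cos\theta_1)\alpha_1+(\sin\theta_1)\alpha_2$ along $N$, the computation of $d\omega(X,\phi X)$ through the splitting $X=X_1+X_2$ to get both the contact condition and the failure of the associated-metric identity, and the use of $\phi$-invariance to compare the ambient tensor $N^1$ with the induced normality tensor. The only cosmetic difference is in the last step, where the paper takes the orthogonal projection of $N^1(X,Y)=0$ onto $TN$ directly, while you exhibit the discrepancy as a multiple of $J\zeta$ and kill it via $\iota^*d\mu=d(\iota^*\mu)=0$; the two computations are equivalent.
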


For an almost contact manifold $(N,\omega, \zeta, \phi)$, we use the terminology of \cite{Blairbook}. 
A metric is said to be  {\it associated} when $g(X,\zeta)=\omega(X)$ and $g(X,\phi Y)= d\omega(X,Y)$ for all $X,Y$, while {\it compatible} means just that $g(\phi X,\phi Y)=g(X,Y)-\omega(X)\omega(Y)$ for all $X,Y$.
By the conclusion (\ref{compatible}) in our Theorem \ref{inducedstructure}, the induced metric is  compatible but it is not associated. Actually at each point of $N$ we have $g(X,\phi Y)\neq d\omega(X,Y)$ for some $X,Y$ tangent to $N$ at the same point.

\begin{proof}
As we know $
\zeta= (\cos \theta_1) Z_1+(\sin \theta_1) Z_2
$ for some constant function $\theta_1$ with $0<\theta_1<\pi/2$.
This gives  along $N$
$$
\omega= (\cos \theta_1) \alpha_1+(\sin \theta_1) \alpha_2.
$$
The $1$-form $\omega$ induces a contact form on $N$ when for all $X$ tangent to $N$, $\omega(X)=0$ and $i_X d\omega=0$ imply that $X=0$. Take any $X$ tangent to $N$ such that $\omega(X)=0$, then $X$ is orthogonal to $\zeta$ so that $X$ is horizontal. Put $X=X_1+X_2$ with $X_i\in \mathcal{H}\cap T\mathcal{F}_j$, for $j\neq i$. If in addition $i_X d\omega=0$ we have
\begin{eqnarray*}
0 &=& - d\omega(X, \phi X) = - (\cos \theta_1)  d\alpha_1(X, \phi X) - (\sin \theta_1)  d\alpha_2(X, \phi X) \\
&=&- (\cos \theta_1)  d\alpha_1(X_1, \phi X_1) - (\sin \theta_1)  d\alpha_2(X_2, \phi X_2) \\
&=&- (\cos \theta_1) ( d\alpha_1 + d\alpha_2)(X_1, \phi X_1) - (\sin \theta_1)  ( d\alpha_1 + d\alpha_2)(X_2, \phi X_2)  \\
&=& (\cos \theta_1) \Vert X_1 \Vert^2 + (\sin \theta_1) \Vert X_2 \Vert^2 . 
\end{eqnarray*}
Now since $\cos \theta_1 > 0 $ and $\sin\theta_1 >0$, we get
$
X_1 = X_2 = 0
$
 and then $X = 0$ as desired.
 Hence $\omega$ induces a contact form on $N$.
 
 It is clear that $\omega(\zeta)=1$. We also have $i_{\zeta}d\omega=0$ because $\zeta$ is vertical and then lying in the kernels of $d\alpha_1$ and $d\alpha_2$. Hence $\zeta$ is the Reeb vector field of the contact form on $N$.

To prove that $ \phi^2 = - I + \omega \otimes \zeta $, first remark that 
$\phi\zeta = 0$ and $\omega(\zeta)=1$. Next any $X$ tangent to $N$ and orthogonal to $\zeta$ is horizontal and then satisfies $\phi^2 X=-X=-X+ \omega(X)\zeta$. Hence $( \phi ,   \zeta, \omega)$ induces an almost contact structure on $N$.

To get the compatibility of the metric with the almost contact structure, we need to prove that $g(\phi X,\phi Y) = g(X,Y) - \omega(X) \omega(Y)$ for every  $X$ and $Y$ tangent to $N$ . Such a vector $X$ decomposes as $X=\omega(X)\zeta + X_H$ for some horizontal vector $X_H$ tangent to $N$, and the same for $Y$.
Replacing in $g(\phi X,\phi Y) = g(X,Y) - \alpha_1(X) \alpha_2(Y) - \alpha_1(Y) \alpha_2(X)$  we obtain
\begin{eqnarray*}
g(\phi X,\phi Y) 
&=& g(X,Y) - \alpha_1(\omega(X)\zeta) \alpha_2(\omega(Y)\zeta) - \alpha_1(\omega(Y)\zeta) \alpha_2(\omega(X)\zeta) \\
&=& g(X,Y) - \omega(X)\omega(Y) \left(\alpha_1^2(\zeta) +\alpha_2^2(\zeta)\right) \\
&=& g(X,Y) - \omega(X)\omega(Y) 
\end{eqnarray*}
Hence $( \phi ,   \zeta, \omega, g)$ induces an almost contact metric structure on $N$. 

Now we prove that the induced metric is not associated. 
Since $N$ is of dimension $\geq3$, at every point of $N$ there exists a nonzero vector $X$ tangent to $N$ and orthogonal to $\zeta$. By Theorem \ref{theoremleaning}, $X=X_1+X_2$ with $X_i$ nonzero horizontal and tangent to $\mathcal{F}_j$, for 
$j\neq i$.
Choosing $Y=\phi X$ let us compare $g(X,\phi Y)$ with $d\omega(X,Y)$. 
On the one hand $-g(X,\phi Y)=g(X,X)=\Vert X_1\Vert ^2 +\Vert X_2\Vert ^2$, and on the other hand as before we have
\begin{eqnarray*}
-d\omega(X,Y)
&=& -d\omega(X,\phi X) \\
&=& (\cos \theta_1) \Vert X_1 \Vert^2 + (\sin \theta_1) \Vert X_2 \Vert^2 . 
\end{eqnarray*}
The difference being 
$(1-\cos \theta_1) \Vert X_1 \Vert^2 + (1-\sin \theta_1) \Vert X_2 \Vert^2 >0 $, 
we get $g(X,\phi Y)\neq d\omega(X,Y)$. Hence the metric is not associated for the almost contact structure of $N$.

When the contact pair structure $(\alpha_1,\alpha_2,\phi)$ is normal, for every $X$ and $Y$ tangent to $N$ we have
$$
[\phi,\phi](X,Y)+2d\alpha_1(X,Y)Z_1+2d\alpha_2(X,Y)Z_2=0
$$
and for its orthogonal projection on $N$ using the $\phi$-invariance of $N$ we get
\begin{eqnarray*}
0&=&[\phi,\phi](X,Y)+2(\cos \theta_1)d\alpha_1(X,Y)\zeta+2\sin(\theta_1)d\alpha_2(X,Y)\zeta\\
&=& [\phi,\phi](X,Y)+2d\omega(X,Y)\zeta,
\end{eqnarray*}
which means that the almost contact structure on $N$ is normal. This completes the proof.
\end{proof}

We can summarize our discussion above concerning the induced structures on $\phi$-invariant manifolds as follows.
\begin{theorem}
Let $(M, \alpha_1, \alpha_2, \phi, g)$ be a metric contact pair with decomposable $\phi$ and Reeb vector fields $Z_1$ and $Z_2$, and $N$ a connected $\phi$-invariant submanifold of $M$ of dimension $\geq 2$.
 
\begin{enumerate}
\item When $N$ has even dimension, then it inherits a metric $f$-structure with two complemented frames.
\item Suppose that $N$ has odd-dimension. Along $N$ let $\zeta$ be the normalized vector field of a nonzero vector field among $Z_1^T$ and $Z_2^T$ (the tangential parts of $Z_1$ and $Z_2$), and set $\omega=g(\zeta,\cdot )$. Then
\begin{enumerate}
\item $\omega$ induces  
a contact form on $N$ with Reeb vector field $\zeta$, and $( \phi ,   \zeta, \omega, g)$ induces  
an almost contact metric structure on $N$.
\item the induced almost contact metric on $N$ is contact metric if and only if  $N$ is tangent to one Reeb vector field and orthogonal to the other one.
\end{enumerate}
\end{enumerate}
In all cases, if  
the metric contact pair on $M$ is normal, the induced structure on $N$ is normal.
\end{theorem}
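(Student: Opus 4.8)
The plan is to read the final statement as a consolidation of the case analysis already carried out and to organize the proof around Theorem~\ref{theoremclassification}. Since $\dim N\geq 2$, that classification leaves exactly the three possibilities (\ref{caseeven}), (\ref{caseodd1tangent}) and (\ref{caseoddleaning}); I would go through these in turn, each time reading off both the induced-structure conclusion and the normality conclusion from a result already established, so that no new computation is needed.

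First I would settle the even-dimensional case. By Theorem~\ref{theoremclassification}(\ref{caseeven}) such an $N$ is tangent to both $Z_1$ and $Z_2$, so the proposition above --- that a $\phi$-invariant submanifold tangent to both Reeb vector fields carries a metric $f$-structure with two complemented frames, and that a normal metric contact pair upstairs induces a $\mathcal{K}$-structure on $N$ --- applies verbatim and yields part~(1) together with the normality statement in this case.

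For odd $\dim N$ I would split along Theorem~\ref{theoremclassification}(\ref{caseodd1tangent}) versus (\ref{caseoddleaning}). In case~(\ref{caseodd1tangent}), say $Z_1$ tangent and $Z_2$ orthogonal to $N$, one has immediately $Z_2^T=0$ and $Z_1^T=Z_1$, so the $\zeta$ and $\omega$ of the statement are just $Z_1$ and $\alpha_1$ restricted to $N$; the Blair--Ludden--Yano result \cite{BLY} recalled above then gives that $(\phi,Z_1,\alpha_1,g)$ is a contact metric structure on $N$, Sasakian --- in particular normal --- when the ambient structure is normal. In the leaning case~(\ref{caseoddleaning}) I would first invoke Theorem~\ref{theoremleaning} to record that $Z_1^T$ and $Z_2^T$ are both nonzero and share the common normalization $\zeta$, so that $\zeta$ and $\omega=g(\zeta,\cdot)=(\cos\theta_1)\alpha_1+(\sin\theta_1)\alpha_2$ are exactly the objects of Theorem~\ref{inducedstructure}; that theorem then delivers precisely that $\omega$ is a contact form on $N$ with Reeb vector field $\zeta$, that $(\phi,\zeta,\omega,g)$ is an almost contact metric structure which is \emph{not} contact metric, and that it is normal whenever the ambient metric contact pair is normal. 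Together these give part~(2)(a) and the normality statement for odd $N$.

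It then remains to read off part~(2)(b). The two odd-dimensional alternatives of Theorem~\ref{theoremclassification} are mutually exclusive, and the previous paragraph showed the induced almost contact metric structure is contact metric in case~(\ref{caseodd1tangent}) and fails to be contact metric in case~(\ref{caseoddleaning}); hence it is contact metric if and only if $N$ is tangent to one Reeb vector field and orthogonal to the other, as claimed. I do not expect a genuine obstacle: the one point needing care is the compatibility bookkeeping --- checking that the two \emph{a priori} distinct recipes for $\zeta$, namely normalizing $Z_1^T$ or normalizing $Z_2^T$, agree in the leaning case (this is exactly the content of Theorem~\ref{theoremleaning}), and that the resulting $\zeta$, $\omega$ are the ones used in Theorem~\ref{inducedstructure} --- after which the whole statement is an assembly of pieces proved before.
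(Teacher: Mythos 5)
Your proposal is correct and matches the paper's intent exactly: the paper offers no separate proof, presenting this theorem as a summary assembled from the classification in Theorem~\ref{theoremclassification}, the proposition on the even-dimensional case, the Blair--Ludden--Yano result for the tangent/orthogonal case, and Theorem~\ref{inducedstructure} for the leaning case. Your bookkeeping point about the two normalizations of $Z_1^T$ and $Z_2^T$ agreeing is indeed the only subtlety, and it is covered by the remark following Theorem~\ref{theoremleaning}.
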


\section{Characteristic leaves of $d\alpha_1$ and $d\alpha_2$}

\noindent Consider a metric contact pair $(M, \alpha_1, \alpha_2, \phi, g)$ of type $(h,k)$ with decomposable $\phi$ and Reeb vector fields $Z_1$ and $Z_2$. 
In \cite{BH2},
it has been shown that the $2$-dimensional vertical foliation $\mathcal{V}$ tangent to the Reeb distribution is totally geodesic.
Actually, this is even true in general for any compatible metric $g$ with respect to a contact pair structure $(\alpha_1, \alpha_2, \phi)$ without decomposability condition on $\phi$.
 
It is also known  that the characteristic foliations $\mathcal{F}_1$ and $\mathcal{F}_2$ of the $1$-forms
$\alpha_1$ and $\alpha_2$ respectively are orthogonal, and their leaves are minimal  \cite{BH4}.

Now recall the existence of two other remarkable foliations in the manifold $M$, which are the characteristic foliations $\mathcal{G}_1$ and $\mathcal{G}_2$ of the $2$-forms $d\alpha_1$ and $d\alpha_2$ respectively.
Their corresponding subbundles are
$$
T\mathcal{G}_i=\{X: d\alpha_i(X,Y)=0 \;  \forall Y   \}, \qquad i=1,2.
$$
Each leaf of $\mathcal{G}_1$ (respectively $\mathcal{G}_2$) inherits a metric contact pair of type $(0,k)$ (respectively $(h,0)$), and is foliated by leaves of $\mathcal{V}$ and also by leaves of $\mathcal{F}_1$ (respectively $\mathcal{F}_2$)  \cite{BH2}.
Moreover the leaves of $\mathcal{G}_1$ and $\mathcal{G}_2$ are $\phi$-invariant and tangent to both $Z_1$ and $Z_2$, and  we have the following minimality theorem.

\begin{theorem}\label{ch4-th-Gi-minimalfoliations}
On a metric contact pair $(M, \alpha_1, \alpha_2, \phi, g)$ with decomposable $\phi$, the leaves of the characteristic foliations $\mathcal{G}_1$ and $\mathcal{G}_2$ of the $2$-forms $d\alpha_1$ and $d\alpha_2$  are minimal.
\end{theorem}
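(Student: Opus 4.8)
The plan is to deduce the minimality of each leaf of $\mathcal{G}_1$ from the already–recalled minimality of the leaves of $\mathcal{F}_1$, exploiting that $T\mathcal{G}_1$ is obtained from $T\mathcal{F}_1$ by adjoining the geodesic direction $\mathbb{R}Z_1$. Since $\phi$ is decomposable, $\mathcal{F}_1$ and $\mathcal{F}_2$ are orthogonal and $TM=\mathbb{R}Z_1\oplus\mathbb{R}Z_2\oplus(\mathcal{H}\cap T\mathcal{F}_1)\oplus(\mathcal{H}\cap T\mathcal{F}_2)$ splits orthogonally, with $T\mathcal{F}_1=\mathbb{R}Z_2\oplus(\mathcal{H}\cap T\mathcal{F}_1)$ and $T\mathcal{F}_2=\mathbb{R}Z_1\oplus(\mathcal{H}\cap T\mathcal{F}_2)$. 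First I would check, by a dimension count, that $T\mathcal{G}_1=\mathbb{R}Z_1\oplus T\mathcal{F}_1=\mathbb{R}Z_1\oplus\mathbb{R}Z_2\oplus(\mathcal{H}\cap T\mathcal{F}_1)$: indeed $d\alpha_1$ has rank $2h$ (because $(d\alpha_1)^h\neq0$ while $(d\alpha_1)^{h+1}=0$), so $\dim T\mathcal{G}_1=\dim M-2h=2k+2$, whereas the right-hand side is a $(2k+2)$-dimensional subbundle annihilated by $i_{(\cdot)}d\alpha_1$. Consequently, for any leaf $L$ of $\mathcal{G}_1$ the normal bundle is exactly $\mathcal{H}\cap T\mathcal{F}_2$, which is orthogonal to $T\mathcal{F}_1$ and is contained in the normal bundle $\mathbb{R}Z_1\oplus(\mathcal{H}\cap T\mathcal{F}_2)$ of every leaf of $\mathcal{F}_1$; moreover each leaf of $\mathcal{F}_1$ sits inside a leaf of $\mathcal{G}_1$.

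Next I would compute the mean curvature of $L$ at a point $p$ with a well-chosen orthonormal frame. Let $F$ be the leaf of $\mathcal{F}_1$ through $p$; take an orthonormal frame $\{e_1,\dots,e_{2k+1}\}$ of $T\mathcal{F}_1$ near $p$ and complete it to the orthonormal frame $\{Z_1,e_1,\dots,e_{2k+1}\}$ of $TL$. Then the mean curvature vector of $L$ at $p$ equals $(\nabla_{Z_1}Z_1)^{\perp_L}+\sum_{j=1}^{2k+1}(\nabla_{e_j}e_j)^{\perp_L}$. The first term vanishes since $\nabla_{Z_1}Z_1=0$ (equivalently: the vertical foliation is totally geodesic and $\nabla_XZ_1$ is horizontal by Lemma \ref{nablaXZi}). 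For the second term, the $e_j$ restrict along $F$ to sections of $TF=T\mathcal{F}_1$; since $F$ is minimal in $M$, the vector $\sum_j\nabla_{e_j}e_j$ lies in $TF$ at $p$, and as $\perp_L=\mathcal{H}\cap T\mathcal{F}_2$ is orthogonal to $TF$ this forces $\sum_j(\nabla_{e_j}e_j)^{\perp_L}=0$. Hence the mean curvature of $L$ vanishes, i.e. $L$ is minimal. The statement for $\mathcal{G}_2$ follows by interchanging the indices $1$ and $2$.

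I expect the only delicate point to be the bookkeeping of these orthogonal splittings — in particular establishing $T\mathcal{G}_1=\mathbb{R}Z_1\oplus T\mathcal{F}_1$, so that $\perp_L=\mathcal{H}\cap T\mathcal{F}_2$ is both orthogonal to $T\mathcal{F}_1$ and contained in the normal bundle of the $\mathcal{F}_1$-leaves — since this is exactly what lets one transfer their minimality without ever computing a second fundamental form. Note also that the argument uses only the decomposability of $\phi$ (for the orthogonality of $\mathcal{F}_1,\mathcal{F}_2$ and the minimality of their leaves) together with $\nabla_{Z_i}Z_j=0$, and never the integrability of $J$ or $T$, in agreement with the remark preceding the theorem.
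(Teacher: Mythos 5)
Your proof is correct, but it takes a genuinely different route from the paper. You first identify $T\mathcal{G}_1=\mathbb{R}Z_1\oplus T\mathcal{F}_1$ (which is right: $i_{Z_1}d\alpha_1=0$ and $X-\alpha_1(X)Z_1\in T\mathcal{F}_1$ whenever $i_Xd\alpha_1=0$, with the dimension count confirming equality), and then compute the mean curvature of a $\mathcal{G}_1$-leaf $L$ in an adapted frame, killing the $Z_1$-term by $\nabla_{Z_1}Z_1=0$ and the remaining terms by the known minimality of the $\mathcal{F}_1$-leaves together with the inclusion $\perp_L=\mathcal{H}\cap T\mathcal{F}_2\subset\perp_F$, which guarantees $(v^{\perp_F})=0\Rightarrow v^{\perp_L}=0$ for $v=\sum_j\nabla_{e_j}e_j\in TF\subset TL$. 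The paper instead applies Rummler's criterion directly: it writes the characteristic $(2k+2)$-form of $\mathcal{G}_1$ as $\alpha_1\wedge\alpha_2\wedge(d\alpha_2)^k$ (up to a constant, using the explicit volume form of the associated metric) and observes that $d\bigl(\alpha_1\wedge\alpha_2\wedge(d\alpha_2)^k\bigr)=d\alpha_1\wedge\alpha_2\wedge(d\alpha_2)^k$ vanishes whenever $2k+2$ of its arguments are tangent to $\mathcal{G}_1$, since $d\alpha_1$ annihilates $T\mathcal{G}_1$ by definition. Your argument buys a short, frame-level reduction to a previously established result (minimality of the $\mathcal{F}_1$-leaves from \cite{BH4}) plus the geodesic property of the Reeb fields, while the paper's argument is self-contained modulo the volume-form identity and avoids any covariant-derivative computation; both use decomposability only through the orthogonality of the characteristic foliations, and neither needs normality.
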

By normality condition on the metric contact pair of the ambient manifold $M$, it has been shown in \cite{BH5} that the leaves of $\mathcal{G}_1$ and $\mathcal{G}_2$ are minimal. In our theorem, the normality condition is not needed.
\begin{proof}
To prove the minimality of the leaves of $\mathcal{G}_1$ in the Riemannian manifold $(M,g)$, we use the minimality criterion of Rummler \cite{Rummler}.
If the type numbers of the contact pair are $(h,k)$,
the dimension of this foliation is $2k+2$.
The volume element of the metric $g$ can be written as \cite{BH4} 
$$
dV= \frac{(-1)^{h+k}}{2^{h+k} h! k!} \alpha_1 \wedge (d\alpha_1) ^h \wedge \alpha_2 \wedge (d\alpha_2) ^k,
$$
 so that the characteristic $(2k+2)$-form of the foliation $\mathcal{G}_1$  is, up to a constant,
$
\omega = \alpha_1 \wedge \alpha_2 \wedge (d\alpha_2) ^k .
$ 
Since the Reeb vector field $Z_1$ is tangent to the foliation $\mathcal{G}_1$ and 
 $$i_{Z_1} d\omega=i_{Z_1} (d \alpha_1 \wedge \alpha_2 \wedge (d\alpha_2) ^k) =0,$$
 the characteristic $(2k+2)$-form of the foliation $\mathcal{G}_1$ is closed on the subbundle $T\mathcal{G}_1$, giving that the leaves of $\mathcal{G}_1$ are minimal.
 The same argument applies to $\mathcal{G}_2$, completing the proof.
\end{proof}

\begin{example}
In Example \ref{liegroup} the characteristic subbundle $T\mathcal{G}_1$ of $d\omega_1$ (respectively $T\mathcal{G}_2$ of $d\omega_2$) is spanned by $Y_1$, $Y_2$, $Y_4$ and $Y_6$ (respectively $Y_1$, $Y_2$, $Y_3$ and $Y_5$).
Observe that the leaves of  $\mathcal{G}_2$ are totally geodesic while those of $\mathcal{G}_1$ are minimal but not totally geodesic.
\end{example}

\section{Metric contact pairs of type $(h,0)$}
\noindent 
On a metric contact pair with decomposable endomorphism $\phi$ the leaves of the two characteristic foliations are $\phi$-invariant submanifolds.
They are minimal and a priori they are not totally geodesic \cite{BH4}.
However when the contact pair is of type $(h,0)$ we can state the following.

\begin{theorem}\label{theoremKillingproduct}
Consider a metric contact pair $(M, \alpha_1, \alpha_2, \phi, g)$ of type $(h,0)$ and Reeb vector fields $Z_1$ and $Z_2$.
If $Z_2$ is Killing, then the metric contact pair $M$ is locally the product of a contact metric manifold with $\mathbb{R}$.
\end{theorem}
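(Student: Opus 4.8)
The plan is to show that the hypothesis forces the Reeb field $Z_2$ to be \emph{parallel}, and then to obtain the claimed splitting by a local de Rham argument, checking along the way that the isometric splitting also splits the whole metric contact pair structure, not merely the metric. Recall that for type $(h,0)$ one has $d\alpha_2=0$, and that $\phi$ is automatically decomposable here: $T\mathcal{F}_1=\mathbb{R}Z_2$, $T\mathcal{F}_2=\Ker\alpha_2$, and $\alpha_2\circ\phi=0$ together with $\phi Z_2=0$ give $\phi(T\mathcal{F}_i)\subset T\mathcal{F}_i$.

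First I would prove that $Z_2$ is parallel. A Killing field whose dual $1$-form is closed is parallel: the Killing condition makes $\nabla Z_2$ skew-symmetric as a $(1,1)$-tensor, while closedness of the dual makes it symmetric, so it vanishes. Since $\alpha_2=g(Z_2,\cdot)$ and $d\alpha_2=0$, this gives $\nabla_X Z_2=0$ for every $X$ (consistently with Lemma \ref{nablaXZi}, which already yields $\nabla_X Z_2$ horizontal). Hence the orthogonal distributions $\mathbb{R}Z_2=T\mathcal{F}_1$ and $Z_2^{\perp}=\Ker\alpha_2=T\mathcal{F}_2$ are both parallel, so integrable and totally geodesic, which recovers the assertion in the introduction that the two characteristic foliations are totally geodesic.

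Next, by the local de Rham decomposition theorem, around each point $M$ is isometric to a Riemannian product $L\times J$, where $J\subset\mathbb{R}$ is an interval with coordinate $t$ and $Z_2=\partial/\partial t$, and $L$ is an open piece of a $(2h+1)$-dimensional leaf of $\mathcal{F}_2$. To see that this isometry respects the contact pair structure, I would check that all the structure tensors are invariant under the flow of $Z_2$: $\mathcal{L}_{Z_2}g=0$ (Killing), $\mathcal{L}_{Z_2}\alpha_i=i_{Z_2}d\alpha_i+d(\alpha_i(Z_2))=0$ for $i=1,2$, $\mathcal{L}_{Z_2}Z_1=[Z_2,Z_1]=0$, and $\mathcal{L}_{Z_2}\phi=0$. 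The last one I would deduce from the associated-metric relation, which for type $(h,0)$ reads $g(\phi X,Y)=-d\alpha_1(X,Y)$, i.e. the metric dual of $\phi X$ is $-i_X d\alpha_1$; applying $\mathcal{L}_{Z_2}$ and using $\mathcal{L}_{Z_2}g=0$ and $\mathcal{L}_{Z_2}d\alpha_1=d(\mathcal{L}_{Z_2}\alpha_1)=0$, one gets $\mathcal{L}_{Z_2}(\phi X)=\phi([Z_2,X])$, that is $\mathcal{L}_{Z_2}\phi=0$. Consequently, under $M\cong L\times J$ the metric contact pair becomes the product of the triple $(\alpha_1|_L,\phi|_L,g|_L)$ on $L$ with the trivial structure on $J$ (Reeb field $\partial/\partial t$, form $dt$, zero endomorphism). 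By \cite{BH2} (see also \cite{BH4}) the leaf $L$ of $\mathcal{F}_2$ equipped with $(\phi|_L,Z_1|_L,\alpha_1|_L,g|_L)$ is a contact metric manifold, so $M$ is locally the product of a contact metric manifold with $\mathbb{R}$.

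Finally, if the metric contact pair on $M$ is normal, then the vanishing of $N^1$ restricts to the vanishing of the corresponding normality tensor of $(\phi|_L,Z_1|_L,\alpha_1|_L)$, so by \cite{BH3} the contact metric factor $L$ is Sasakian. The one non-routine point I anticipate is in the third paragraph: establishing $\mathcal{L}_{Z_2}\phi=0$ and, more to the point, upgrading the de Rham splitting of the Riemannian metric to a splitting of the full metric contact pair rather than only of the metric; the remaining steps are short computations or direct citations.
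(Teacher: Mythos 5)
Your proof is correct, but it reaches the totally geodesic splitting by a genuinely different route from the paper. The paper's argument is purely metric and local on a leaf: since $\mathcal{F}_2$ has codimension one with unit normal $Z_2$, a geodesic $\gamma$ tangent to a leaf at one point satisfies $\dot\gamma\, g(\dot\gamma,Z_2)=g(\nabla_{\dot\gamma}\dot\gamma,Z_2)+g(\dot\gamma,\nabla_{\dot\gamma}Z_2)=0$ by the geodesic equation and the Killing condition, so $\gamma$ stays in the leaf; this uses only that $Z_2$ is Killing and normal to $\mathcal{F}_2$, and never invokes $d\alpha_2=0$ explicitly. You instead combine the Killing condition (skew-symmetry of $\nabla Z_2$) with the closedness of the metric dual $\alpha_2$ (symmetry of $\nabla Z_2$) to conclude $\nabla Z_2=0$, so that both $\mathbb{R}Z_2$ and $\Ker\alpha_2$ are parallel distributions. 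Your version gives strictly more --- the parallelism of $Z_2$, which is the $\mathcal{P}$-manifold condition of Vaisman --- and it also fills in two points the paper leaves implicit: that decomposability of $\phi$ is automatic in type $(h,0)$, and that the local de Rham isometry carries the full tuple $(\alpha_1,\alpha_2,\phi,g)$, not just the metric, to a product structure (your Lie-derivative computation $\mathcal{L}_{Z_2}g=\mathcal{L}_{Z_2}\alpha_i=\mathcal{L}_{Z_2}\phi=0$, $[Z_2,Z_1]=0$ is exactly what is needed there, and the identification of the leaf structure as contact metric is the statement from \cite{BH2} recalled in the preliminaries). The paper's proof is shorter and more elementary; yours is more informative and more complete on the product step.
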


We observed that $\mathcal{P}$-manifolds of Vaisman are metric contact pairs of type $(h,0)$ where Reeb vector fields are Killing.
Theorem \ref{theoremKillingproduct} is a generalization to metric contact pairs of a result stated for $\mathcal{P}$-manifolds by Vaisman \cite{Vaisman}.

\begin{proof}
The two characteristic foliations are orthogonal and complementary. The leaves of $\mathcal{F}_1$ are 
the integral curves of $Z_2$ which are geodesics \cite{BH2}.
We have just to prove that the leaves of $\mathcal{F}_2$ are totally geodesic.
Any leaf $F$ of $\mathcal{F}_2$ is a  submanifold of codimension one, 
and the vector field $Z_2$ restricted to $F$ is the normal to the leaf.
When $Z_2$ is Killing, 
any geodesic $\gamma$ of $(M,g)$ starting from a point $p$ of $F$ and tangent to $F$ at $p$ satisfies 
$$\dot{\gamma}\,g\left(\dot{\gamma},Z_2\right) = g\left(\nabla_{\dot{\gamma}}\;\dot{\gamma},Z_2\right)  +  g\left(\dot{\gamma},\nabla_{\dot{\gamma}}\;Z_2\right) = 0.$$
Therefore $g(\dot{\gamma},Z_2)=0$ because $\dot{\gamma}$ and $Z_2$ are orthogonal at $p$.
Hence the geodesic $\gamma$ remains in the leaf $F$.
\end{proof}

We know that the normality of a metric contact pair implies that the Reeb vector fields are Killing \cite{BBH}. 
The following corollary is an immediate consequence of our previous theorem, and it has been already stated in equivalent terms of $\mathcal{PK}$-manifolds (nowadays called non-K\"ahler Vaisman manifolds) by Vaisman.
\begin{corollary}[Vaisman \cite{Vaisman}]
A normal metric contact pair of type $(h,0)$ is locally the product of a Sasakian manifold with $\mathbb{R}$.
\end{corollary}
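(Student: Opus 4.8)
The plan is to derive this corollary in a few lines from Theorem \ref{theoremKillingproduct} together with the description, recalled in Section \ref{preliminaries}, of when a product structure is normal. I would first invoke the fact, noted just above and proved in \cite{BBH}, that normality of a metric contact pair forces both Reeb vector fields to be Killing. In particular $Z_2$ is Killing, and since the type is $(h,0)$ the Reeb vector field $Z_2$ spans the characteristic distribution of $\alpha_1$, so Theorem \ref{theoremKillingproduct} applies and gives that $M$ is, locally and with all its structure, isometric to a product $P\times\mathbb{R}$ where $P$ is a contact metric manifold (a leaf of $\mathcal{F}_2$ carrying the contact metric structure induced by $(\phi,Z_1,\alpha_1,g)$) and the $\mathbb{R}$-factor is an integral curve of $Z_2$.

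It then remains only to upgrade ``contact metric'' to ``Sasakian'' for the factor $P$. For this I would use the characterization recalled in the preliminaries: the product of a contact metric manifold with $\mathbb{R}$ is a metric contact pair whose normality is equivalent to that contact metric factor being Sasakian. Since the local splitting identifies the given normal metric contact pair on $M$ with precisely such a product, the product structure is normal, hence $P$ is Sasakian, which is the assertion of the corollary. Equivalently, one may argue directly on a leaf $F$ of $\mathcal{F}_2$: because $\phi$ is decomposable (automatic here, since $\mathcal{F}_1$ and $\mathcal{F}_2$ are orthogonal for type $(h,0)$) and $d\alpha_1$ vanishes in the $\mathcal{F}_1$-direction, restricting the identity $N^1=0$ to vectors tangent to $F$ yields the vanishing of the normality tensor of the induced almost contact metric structure $(\phi,Z_1,\alpha_1,g)$ on $F$, so $F$ is Sasakian.

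No step here is a genuine obstacle: once Theorem \ref{theoremKillingproduct} is available the content is essentially bookkeeping. The single point meriting a word of care is that the two a priori different notions of normality --- that of the metric contact pair on $M$ and that of the product structure on $P\times\mathbb{R}$ --- coincide under the local isometry, which is clear since the isometry carries the forms $\alpha_1,\alpha_2$ and the tensor $\phi$ on $M$ to the corresponding product data and $N^1$ is built functorially from these.
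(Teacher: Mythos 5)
Your argument is correct and follows the same route the paper intends: normality forces $Z_2$ to be Killing (by \cite{BBH}), Theorem \ref{theoremKillingproduct} then gives the local splitting as a contact metric manifold times $\mathbb{R}$, and the factor is Sasakian because normality of the product structure is equivalent to the factor being Sasakian (equivalently, the leaves of $\mathcal{F}_2$ of a normal metric contact pair with decomposable $\phi$ are Sasakian by \cite{BH3}). The paper states the corollary as an immediate consequence without spelling this out, so your write-up simply makes the same bookkeeping explicit.
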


\end{document}